\newtheorem{thm}{Theorem}[section]
\newtheorem*{thmA}{Theorem A}
\newtheorem{conj}{Conjecture}[section]
\newtheorem{lem}[thm]{Lemma}
\theoremstyle{definition}
\newtheorem*{ack}{Acknowledgments}
\theoremstyle{remark}
\newtheorem{rem}{Remark}
\numberwithin{equation}{section}
\renewcommand{\(}{\left(}
\renewcommand{\)}{\right)}
\renewcommand{\a}{\alpha}
\renewcommand{\b}{\beta}
\newcommand{\g}{\gamma}
\renewcommand{\d}{\delta}
\newcommand{\e}{\epsilon}
\renewcommand{\k}{\kappa}
\renewcommand{\l}{\lambda}
\renewcommand{\t}{\theta}
\newcommand{\s}{\sigma}
\newcommand{\G}{\Gamma}
\newcommand{\ra}{\rightarrow}
\newcommand{\Vol}{\operatorname{Vol}}
\begin{document}
\title[Blaschke-Santal\'o type inequalities and quermassintegral inequalities]{Blaschke-Santal\'o type inequalities and quermassintegral inequalities in space forms}
	
\author{Yingxiang Hu}
	\address{School of Mathematics, Beihang University, Beijing 100191, P.R. China}
	\email{\href{mailto:huyingxiang@buaa.edu.cn}{huyingxiang@buaa.edu.cn}}
\author{Haizhong Li}
\address{Department of Mathematical Sciences, Tsinghua University, Beijing 100084, P.R. China}
\email{\href{mailto:lihz@tsinghua.edu.cn}{lihz@tsinghua.edu.cn}}
	
\subjclass[2020]{53C21, 53C24, 53C65}
\keywords{Blaschke-Santal\'o type inequality, Quermassintegral inequality, Duality, Dual flows, Space forms}

	\begin{abstract}
		In this paper, we prove a family of identities for closed and strictly convex hypersurfaces in the sphere and hyperbolic/de Sitter space. As applications, we prove Blaschke-Santal\'o type inequalities in the sphere and hyperbolic/de Sitter space, which generalizes the previous work of Gao, Hug and Schneider \cite{GHS03}. We also prove the quermassintegral inequalities in hyperbolic/de Sitter space.
	\end{abstract}	
	\maketitle

\section{Introduction}
A convex body $K$ in Euclidean space $\mathbb R^{n+1}$ is a compact convex set with non-empty interior. For any interior point $z\in \operatorname{int}K$, the dual body of $K$ with respect to $z$ is defined by
\begin{align*}
K_z^\ast=\{ y+z | y\in \mathbb R^{n+1} : y\cdot (x-z)\leq 1, \forall ~x\in K \}.
\end{align*}
As noted by Santal\'o \cite{Sant1949}, there exists a unique point $s=s(K)\in \operatorname{int}K$ such that $\Vol(K_s^\ast)\leq \Vol(K_z^\ast)$ for all $z\in \operatorname{int}K$. The famous Blaschke-Santal\'o inequality says that
\begin{align}\label{s1:Blaschke-Santalo-ineq}
\Vol(K) \cdot \Vol(K_s^\ast) \leq |\mathbb B^{n+1}|^2,
\end{align}
where $|\mathbb B^{n+1}|$ is the volume of the unit ball in $\mathbb R^{n+1}$. Equality holds if and only if $K$ is an ellipsoid. This inequality in $\mathbb R^3$ was first proved by Blaschke \cite{Blaschke1917}, and later it is generalized to higher dimensions by Santal\'o \cite{Sant1949} with the equality case characterized by Petty \cite{Petty1985}. For sake of its importance, more proofs of the Blaschke-Santal\'o inequality were given by Hug \cite{Hug1996}, Meyer and Pajor \cite{Meyer-Pojor1990}. Andrews \cite{And96} studied the affine normal flow, which also provides a proof of the Blaschke-Santal\'o inequality for smooth convex hypersurfaces. By using Steiner symmetrization, Lutwak and Zhang \cite{Lutwak-Zhang1997} extended the Blaschke-Santal\'o inequality to the $L_p$ Blaschke-Santal\'o inequality for star bodies.

The volume functional can be considered as one of the quermassintegrals, which are the coefficients in the {\em Steiner formula}:
\begin{align*}
\Vol(K+tB)=\sum_{i=0}^{n+1}\binom{n+1}{i}W_i(K)t^i,
\end{align*}
where $K+tB=\{ x\in \mathbb R^{n+1} : d(K,x)\leq t \}$ is the outer parallel body of $K$ at distance $t\geq 0$. If $K$ is smooth, then the quermassintegrals can be also expressed in terms of curvature integrals of its boundary and the enclosed volume as follows:
\begin{equation}\label{s1:quermassintegral-Euclidean-space}
\begin{split}
&W_0(K)=\operatorname{Vol}(K), \quad W_{k+1}(K)=\frac{1}{n+1}\int_{\partial K}E_{k}d\mu, \quad k=0,1,\cdots,n,
\end{split}
\end{equation}
where $E_k$ is the normalized $k$th mean curvature and $d\mu$ is the area element of $\partial K$, respectively. For simplicity, we define the $k$th {\em mean radius} of the smooth bounded domain $K$ as follows:
\begin{align}\label{s1:def-mean-radius}
\zeta_k(K)=f_k^{-1}(W_k(K)),
\end{align}
where $f_k:[0,\infty)\ra \mathbb R_{+}$ is a monotone function defined by
$$
f_k(r)=W_k(B_r)=\frac{1}{n+1}\int_{\partial B_r}E_{k-1} d\mu=\frac{\omega_n}{n+1}r^{n+1-k},
$$
the $k$th quermassintegral for the ball of radius $r$, $\omega_n$ is the area of the unit sphere $\mathbb S^n\subset \mathbb R^{n+1}$ and $f_k^{-1}$ is the inverse of $f_k$. In terms of the mean radius, the Blaschke-Santal\'o inequality \eqref{s1:Blaschke-Santalo-ineq} becomes
\begin{align}\label{s1:BS-ineq-equivalent-form}
\zeta_0(K) \cdot \zeta_0(K_s^\ast) \leq 1.
\end{align}

The following inequalities can be considered as a natural extension of the Blaschke-Santal\'o inequality \eqref{s1:BS-ineq-equivalent-form} in Euclidean space.
\begin{thmA}
	Let $K$ be a convex body in $\mathbb R^{n+1}$ and $K_z^\ast$ is its dual body of $K$ with respect to any point $z\in \operatorname{int}K$. Then for $0\leq i,j\leq n$ satisfying $i+j\geq n$, there holds
	\begin{align}\label{s1:Firey-ineq}
	W_i(K)^{n+1-j} W_j(K_z^\ast)^{n+1-i} \geq |\mathbb B^{n+1}|^{2n+2-i-j},
	\end{align}
	or equivalently, 
	\begin{align}\label{s1:Firey-ineq-equivalent-form}
	\zeta_{i}(K) \cdot \zeta_{j}(K_z^\ast) \geq 1.
	\end{align}	
	Equality holds if and only if $K$ is a ball centered at $z$.
\end{thmA}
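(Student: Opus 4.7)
The plan is to split Theorem~A into two stages: a reduction step that pushes the inequality down to the extremal diagonal $i+j=n$, and a base-case argument that handles this diagonal directly. After translating so that $z=0$ (harmless, since the statement is translation-equivariant once the duality is recentered), I would first establish the monotonicity of the mean radii $k\mapsto \zeta_k(K)$. This follows from the Aleksandrov-Fenchel inequality applied to the quermassintegrals $W_k(K)$, viewed as mixed volumes $V(K[n+1-k],\B^{n+1}[k])$, which yields $\zeta_k(K)\leq \zeta_{k+1}(K)$ for all $k$, and similarly for $K_z^\ast$. Given $i+j\geq n$, I pick $i_0\leq i$ and $j_0\leq j$ with $i_0+j_0=n$; monotonicity then gives $\zeta_i(K)\,\zeta_j(K_z^\ast)\geq \zeta_{i_0}(K)\,\zeta_{j_0}(K_z^\ast)$, so the problem reduces to the diagonal.

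For the diagonal base case $i+j=n$, the cleanest classical route combines Kubota's integral recursion
\[
W_k(K)=\frac{|\B^{n+1}|}{|\B^{n+1-k}|}\int_{G(n+1,n+1-k)}V_{n+1-k}(K|E)\,d\nu(E),
\]
the projection-section duality $(K_z^\ast)|E=((K-z)\cap E)^\ast_E$ inside each subspace $E$, and the Blaschke-Santal\'o inequality applied \emph{within} each such $E$ individually. A H\"older/Jensen pairing over the Grassmannian then assembles the required product inequality in the form \eqref{s1:Firey-ineq} with $j=n-i$. As an alternative, one can invoke the Aleksandrov-Fenchel inequality directly on the mixed-volume expansion of $V(sK+tK_z^\ast)$ and exploit its log-concavity in $(s,t)$.

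The principal difficulty is directional: Blaschke-Santal\'o gives an \emph{upper} bound on $V(L)\,V(L^\ast)$ inside each section, whereas our target is a \emph{lower} bound on the product $W_i(K)\,W_j(K_z^\ast)$. Flipping this direction requires coupling the sectionwise Blaschke-Santal\'o estimate to an Aleksandrov-Fenchel concavity estimate in a way that preserves sharpness, and I expect this to be the most delicate calculation in the argument. For the equality case, the equality in Blaschke-Santal\'o forces the relevant sections of $K$ to be ellipsoids centered at $z$, while the equality in Aleksandrov-Fenchel (used in the monotonicity reduction) forces $K$ to be homothetic to $\B^{n+1}$; combining these constraints forces $K$ to be a ball centered at $z$, as stated.
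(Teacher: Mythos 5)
The paper itself does not prove Theorem~A; it attributes the diagonal case $i+j=n$ to Firey, the case $i=j=n$ to Lutwak, and the deduction of the general case from the diagonal to Heil and Lutwak independently. So your proposal is not a recovery of the paper's argument but a fresh attempt. Its reduction step is correct and matches the classical route: Alexandrov--Fenchel yields log-concavity of the quermassintegrals, hence $W_{k+1}(K)^{n+1-k}\geq W_k(K)^{n-k}\,|\B^{n+1}|$, i.e.\ $\zeta_k(K)\leq\zeta_{k+1}(K)$, and choosing $i_0\leq i$, $j_0\leq j$ with $i_0+j_0=n$ reduces \eqref{s1:Firey-ineq-equivalent-form} to the diagonal.

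Your base case does not work, and the directional mismatch you flag is a genuine obstruction, not a delicate step to be patched. Sectionwise Blaschke--Santal\'o gives an \emph{upper} bound on $V_{i+1}\bigl(((K-z)\cap F)^\ast_F\bigr)$, and only at the Santal\'o point of the section, which need not be $z$; it therefore yields an upper bound on $W_j(K_z^\ast)$, while \eqref{s1:Firey-ineq} is a lower bound, and no H\"older or Jensen averaging over the Grassmannian reverses an inequality. (A sharp lower bound for the sectional volume product would be Mahler's conjecture, which is not at your disposal.) The equality case makes the mismatch decisive: on the diagonal your monotonicity step is vacuous, so the equality characterization would come entirely from Blaschke--Santal\'o, whose extremizers are ellipsoids, whereas Theorem~A demands balls centered at $z$. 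Your fallback via log-concavity of $V(sK+tK_z^\ast)$ has the same problem in disguise: it relates mixed volumes of $K$ and $K_z^\ast$ to one another but supplies no comparison with $|\B^{n+1}|$. The diagonal is in fact proved without Blaschke--Santal\'o. For $i=0$, $j=n$, translating $z$ to the origin and using $h_{K^\ast}=\rho_K^{-1}$, the claim $W_0(K)\,W_n(K^\ast)^{n+1}\geq|\B^{n+1}|^{n+2}$ becomes
\[
\Bigl(\int_{\mathbb S^n}\rho_K^{n+1}\,du\Bigr)\Bigl(\int_{\mathbb S^n}\rho_K^{-1}\,du\Bigr)^{n+1}\geq\omega_n^{n+2},
\]
which is H\"older's inequality, with equality iff $\rho_K$ is constant, i.e.\ $K$ is a ball centered at $z$; for $i=j=n$ it is Cauchy--Schwarz on $\int h_K\cdot\int\rho_K^{-1}$ together with $\rho_K\leq h_K$. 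The intermediate diagonal cases are direct estimates on support and radial functions in the same spirit. You should replace your base case with an argument of that type.
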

For $i+j=n$, this result is due to Firey \cite{Firey1973} and for $i=j=n$ to Lutwak \cite{Lutwak1975}. The general result was deduced from Firey's case, independently by Heil \cite{Heil1976} and Lutwak \cite{Lutwak1976}. Further inequalities of this type were proved by Ghandehari \cite{Ghandehari1991}, see also Schneider's book \cite[P. 393]{Schneider2014}.

In the sphere or hyperbolic space, the Steiner formula also makes sense, but leads to different functionals. An investigation of such functionals within integral geometry can be found in Santal\'o's book \cite{Sant2004}. In the smooth category, convex bodies and their dual bodies in the sphere, as well as the counterparts in hyperbolic/de Sitter space, are well-defined in terms of the Gauss map (see \S \ref{sec:2.2} for details). The motivation of this paper is to extend Blaschke-Santal\'o's inequality \eqref{s1:BS-ineq-equivalent-form} and the inequalities \eqref{s1:Firey-ineq-equivalent-form} in the sphere and hyperbolic/de Sitter space. The quermassintegrals $W_k$ in Riemannian space forms and the upper branch of de Sitter space are introduced in \eqref{s1:quermassintegral-def}, \eqref{s1:quermassintegral-def-de-Sitter} respectively, and the mean radius $\zeta_k$ can be similarly defined by \eqref{s3:def-mean-radius-spaceforms}.

The first result of this paper is a complete family of identities for convex bodies and their dual bodies in the sphere, where the dual body of a convex body in the sphere is defined by \eqref{s2:dual-body-sphere}.
\begin{thm}\label{thm-sphere}
	Let $K$ be a smooth bounded and strictly convex domain in $\mathbb S^{n+1}$ and $K^\ast$ the dual body of $K$. Then the following identities hold:
	\begin{align}\label{s1:identity-sphere}
	\tan\(\zeta_{n-k}(K)\)\cdot \tan\(\zeta_{k}(K^\ast)\)=1, \quad k=0,1,\cdots,n.
	\end{align}	
\end{thm}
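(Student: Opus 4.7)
The strategy combines two essentially different ingredients: a pointwise duality between the local invariants of $\partial K$ and $\partial K^\ast$ produced by the spherical Gauss map, and the explicit form of the spherical Steiner formula. Together they should be rigid enough to pin down each $W_k(K^\ast)$ in terms of the mean radius $\zeta_{n-k}(K)$ of $K$ alone, which is exactly what \eqref{s1:identity-sphere} asserts.

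First I would parametrize $\partial K$ by $X\colon M^n\to\mathbb{S}^{n+1}$ with outward unit normal $\nu$, and note that $\partial K^\ast$ is then parametrized by the Gauss map $X^\ast:=\nu$, with outward unit normal $\nu^\ast:=X$. Differentiating $X\cdot\nu=0$ in $\mathbb{R}^{n+2}$, one obtains, in a principal direction $e_i$, $D_{e_i}\nu=\kappa_i e_i$ and $D_{e_i}X=e_i$; passing to arclength on $\partial K^\ast$ gives $\kappa_i^\ast=1/\kappa_i$ and $d\mu^\ast=E_n(K)\,d\mu$. Using the elementary identity $\sigma_k(1/\kappa)=\sigma_{n-k}(\kappa)/\sigma_n(\kappa)$, this produces the pointwise relation $E_k(K^\ast)\,d\mu^\ast=E_{n-k}(K)\,d\mu$ under the Gauss map, hence
$$\int_{\partial K^\ast}E_k(K^\ast)\,d\mu^\ast=\int_{\partial K}E_{n-k}(K)\,d\mu,\qquad 0\le k\le n.$$
This $k\leftrightarrow n-k$ swap of curvature integrals is the backbone of the duality.

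Next I would bring in the spherical Steiner expansion. A direct computation with the map $(p,t)\mapsto\cos(t)X(p)+\sin(t)\nu(p)$ gives
$$\Vol(K_t)=\Vol(K)+\sum_{j=0}^{n}\binom{n}{j}\left(\int_0^{t}\cos^{n-j}(s)\sin^{j}(s)\,ds\right)\int_{\partial K}E_j(K)\,d\mu,$$
which, specialized to a geodesic ball $B_r$, fixes the explicit trigonometric formula for $f_k(r)=W_k(B_r)$. Combining this expansion written for $K$ at parameter $t$ with the one written for $K^\ast$ at parameter $\pi/2-t$, together with the swap from Step~1, the substitution $s\mapsto\pi/2-s$ converts the curvature integrals over $\partial K^\ast$ to those over $\partial K$. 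The resulting pairwise identities should collapse to
$$W_k(K^\ast)=f_k\!\left(\frac{\pi}{2}-\zeta_{n-k}(K)\right),$$
which, upon inverting $f_k$, becomes $\zeta_k(K^\ast)=\pi/2-\zeta_{n-k}(K)$ and is equivalent to \eqref{s1:identity-sphere}.

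The main obstacle is fitting together the three moving parts just outlined: extracting the precise recursion relating $W_k(K)$, the curvature integral $\int_{\partial K}E_{k-1}\,d\mu$, and the lower quermassintegral $W_{k-2}(K)$; checking that this recursion intertwines correctly with the swap in Step~1; and confirming that, after combining, what drops out is the sharp identity $\zeta_k(K^\ast)+\zeta_{n-k}(K)=\pi/2$ and not some weaker relation among the $W_j$'s. The geodesic ball $K=B_r$, where $K^\ast=B_{\pi/2-r}$ and the identity reduces to $\tan(r)\tan(\pi/2-r)=1$, provides the essential sanity check and fixes the normalizations.
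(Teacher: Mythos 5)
Your proposal takes a genuinely different route from the paper's. The paper proves Lemma~4.1, a variational identity: along any strictly-convexity-preserving flow and its dual flow, the quantity $(k+1)W_k(K_t)+(n+1-k)W_{n-k}(K_t^\ast)$ is conserved, because the variations of $W_k(K_t)$ and $W_{n-k}(K_t^\ast)$ are both proportional to $\int E_k\mathcal{F}\,d\mu$ with opposite factors $-(n+1-k)/(n+1)$ and $(k+1)/(n+1)$. The paper then runs the harmonic mean curvature flow (Gerhardt's convergence result) to shrink $K_t$ to a point and evaluates the conserved constant on geodesic balls, from which $\zeta_k(K)+\zeta_{n-k}(K^\ast)=\pi/2$ follows by comparison. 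You instead propose a purely ``static'' argument: the pointwise duality $\kappa_i^\ast=1/\kappa_i$, $d\mu^\ast=E_n\,d\mu$ (hence the swap $\int_{\partial K^\ast}E_k\,d\mu^\ast=\int_{\partial K}E_{n-k}\,d\mu$) combined with the spherical Steiner expansion. The first step of your proposal is correct, and your Steiner expansion for $\Vol(K_t)$ is correct.

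However, there is a genuine gap exactly where you flag ``the main obstacle.'' The sentence ``the resulting pairwise identities should collapse to $W_k(K^\ast)=f_k(\pi/2-\zeta_{n-k}(K))$'' is precisely the heart of the proof, and nothing you have written actually produces it. The swap converts $W_k(K^\ast)$ into a \emph{specific linear combination} of the curvature integrals $\int_{\partial K}E_{n-k+1+2i}\,d\mu$ (plus $\Vol(K^\ast)$ when $k$ is even), whereas $W_{n-k}(K)$ is a different linear combination involving $\int_{\partial K}E_{n-k-1-2i}\,d\mu$ (plus $\Vol(K)$ when $n-k$ is even). Writing both Steiner expansions and substituting $s\mapsto\pi/2-s$ does not, by itself, identify these two combinations or make either one a function of the other. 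Moreover, $\Vol(K^\ast)$ enters for even $k$, and your outline gives no relation expressing $\Vol(K^\ast)$ in terms of data on $K$. The missing ingredient is the geometric identity
$$K_{\pi/2}=\mathbb{S}^{n+1}\setminus\operatorname{int}(K^\ast), \qquad\text{equivalently}\qquad \Vol(K_t)+\Vol\bigl((K^\ast)_{\pi/2-t}\bigr)=\bigl|\mathbb{S}^{n+1}\bigr|\ \text{for all }t,$$
which, when combined with the swap and the Steiner expansion, closes the algebra and ultimately yields the \emph{linear} conservation law $(k+1)W_k(K)+(n+1-k)W_{n-k}(K^\ast)=C_{n,k}$; only then, by comparing with geodesic balls, do you get the nonlinear reformulation $\zeta_k(K^\ast)=\pi/2-\zeta_{n-k}(K)$. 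Your proposal jumps directly to the nonlinear endpoint without first establishing the linear identity, and without identifying the parallel-body complementarity that makes any version of this ``static'' argument work. Until that step is supplied, the proof is incomplete.
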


We also prove a complete family of identities for convex bodies in hyperbolic space and their dual bodies in de Sitter space, where the dual body in de Sitter space is defined by \eqref{s2:dual-body-deSitter}.
\begin{thm}\label{thm-hyperbolic}
	Let $K$ be a smooth bounded and strictly convex domain in $\mathbb H^{n+1}$ and $K^\ast$ the dual body of $K$ in $\mathbb S^{n,1}_{+}$. Then the following identities hold:
	\begin{align}\label{s1:identity-hyperbolic}
	\coth(\zeta_{k}(K)) \cdot \tanh(\zeta_{n-k}(K^\ast))=1, \quad k=0,1,\cdots,n.
	\end{align}	
\end{thm}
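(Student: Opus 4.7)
My plan is to transpose the proof of Theorem~\ref{thm-sphere} to the Lorentzian setting. The spherical polarity is replaced by the symmetric polarity between $\mathbb H^{n+1}$ and the upper de~Sitter branch $\mathbb S^{n,1}_{+}$, both realized as pseudo-spheres inside the ambient Minkowski space $(\mathbb R^{n+1,1},\langle\cdot,\cdot\rangle)$. For a smooth, strictly convex domain $K\subset \mathbb H^{n+1}$, the outward unit normal map $\nu:\partial K \to \mathbb S^{n,1}_{+}$ is a diffeomorphism onto $\partial K^{\ast}$, and the outward (spacelike) normal to $\partial K^{\ast}$ at $\nu(x)$ is precisely $x$. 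This involutive Gauss duality is the structural input that made Theorem~\ref{thm-sphere} work, and it carries over verbatim to the hyperbolic/de~Sitter pair.

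The concrete steps I would carry out are: (i)~parametrize $\partial K$ over $\mathbb S^{n}$ via a suitable hyperbolic support function $u$ built from $-\langle X,\nu\rangle$, so that the principal radii $r_{i}$ of $\partial K\subset \mathbb H^{n+1}$ are expressed as eigenvalues of a matrix involving $u$ and its Hessian; (ii)~rewrite the quermassintegrals defining \eqref{s1:quermassintegral-def} and \eqref{s1:quermassintegral-def-de-Sitter} as integrals over $\mathbb S^{n}$ in this common parametrization, so that both $W_{k}(K)$ and $W_{n-k}(K^{\ast})$ become elementary symmetric polynomials of the $r_{i}$ weighted by explicit hyperbolic factors; (iii)~exploit the involution $x\leftrightarrow \nu$ to identify the principal radii of $\partial K$ at $x$ with those of $\partial K^{\ast}$ at $\nu(x)$, and assemble the resulting pointwise identities to obtain a functional relation between $W_{k}(K)$ and $W_{n-k}(K^{\ast})$ which translates, via \eqref{s3:def-mean-radius-spaceforms}, into \eqref{s1:identity-hyperbolic}.

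The main obstacle I anticipate is the sign and normalization bookkeeping in the Lorentzian ambient. In the spherical argument both spaces are Riemannian, and the trigonometric factor appearing in $W_{k}(B_{r})$ on the primal side matches the one on the polar side; this is why \eqref{s1:identity-sphere} has the symmetric pair $(\tan,\tan)$. In the present setting the two ambient spaces carry opposite signatures, and as a consequence the hyperbolic factor appearing in the quermassintegrals of a geodesic ball $B_{r}\subset \mathbb H^{n+1}$ differs from that appearing for its dual in $\mathbb S^{n,1}_{+}$---one is built from $\cosh/\sinh$, the other essentially from their reciprocals. This is precisely what forces the asymmetric pair $(\coth,\tanh)$ in \eqref{s1:identity-hyperbolic}. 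I would therefore first calibrate the identity by an explicit computation on a geodesic ball $K=B_{r}$ centered at the origin, and then reduce the general case to this model via the involutive Gauss-map change of variables described above.
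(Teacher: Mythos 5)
Your high-level intuition is reasonable (use the Gauss duality $\kappa_i^\ast=\kappa_i^{-1}$, be careful about Lorentzian signs, calibrate against the geodesic ball), but the approach you outline is quite different from the paper's, and step~(iii) as described has a genuine gap.

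The paper does \emph{not} prove \eqref{s1:identity-hyperbolic} by matching up integrands over $\mathbb S^n$. Instead it observes (Lemma~\ref{s4:lem-identity}) that along \emph{any} curvature flow $\partial_t X = -\sigma\mathcal F\nu$ and its dual flow, the variational formula \eqref{s3:variational-formula} combined with the pointwise duality $E_{n-k}(\kappa^\ast)\,d\mu^\ast = E_k(\kappa)\,d\mu$ makes the single linear combination $(k+1)W_k(K_t)\mp(n+1-k)W_{n-k}(K_t^\ast)$ have vanishing time-derivative, because the two variational rates are the \emph{same} curvature integral $\int E_k\mathcal F\,d\mu$ up to a factor. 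The constant value is then computed by running the harmonic mean curvature flow to a point and taking explicit limits. This deformation-invariance is the crux of the argument, and nothing in your outline substitutes for it.

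The gap in your step~(iii) is concrete. Writing $W_k(K)$ via \eqref{s3:quermassintegral-expression-even}--\eqref{s3:quermassintegral-expression-odd} with $\varepsilon=-1$, the curvature integrals that enter are $\int_{\partial K}E_{k-1}\,d\mu,\ \int_{\partial K}E_{k-3}\,d\mu,\dots$ (indices descending by $2$), plus for even $k$ a genuine bulk term $\Vol(K)$. Writing $W_{n-k}(K^\ast)$ similarly and pulling back by duality, the integrals that enter are $\int_{\partial K}E_{k+1}\,d\mu,\ \int_{\partial K}E_{k+3}\,d\mu,\dots$ (indices ascending by $2$), plus possibly $\Vol(K^\ast)$. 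These two families of curvature integrals are disjoint, so the pointwise identity $E_{n-m}(\kappa^\ast)\,d\mu^\ast = E_m(\kappa)\,d\mu$ produces no cancellation between them; and $\Vol(K)$, $\Vol(K^\ast)$ are not boundary integrals at all, so they are not related by any pointwise Gauss-map change of variables. Concretely, already for $n=1$, $k=0$ the claimed identity reduces (after Gauss--Bonnet in $\mathbb H^2$) to $\Vol(K)=2W_1(K^\ast)-2\pi$; nothing in a pointwise comparison of principal radii recovers the Gauss--Bonnet constant $2\pi$, nor the way $\Vol(K)$ appears. To make a ``static'' argument work one would need to invoke the Gauss--Bonnet--Chern relation $W_{n+1}(K)=\omega_n/(n+1)$ and carefully assemble the coefficients in \eqref{s3:quermassintegral-expression-even}--\eqref{s3:quermassintegral-expression-odd-de-Sitter}, which is a different and nontrivial bookkeeping problem that you have not addressed; otherwise one needs the flow argument. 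Your final sentence about ``reducing the general case to the ball via the Gauss-map change of variables'' is precisely the step that is not justified.
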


As a direct application of Theorem \ref{thm-sphere}, we obtain the following Blaschke-Santal\'o type inequalities in the sphere.
\begin{thm}\label{cor-BS-ineq-sphere-extension}
	Let $K$ be a smooth bounded and strictly convex domain in $\mathbb S^{n+1}$ and $K^\ast$ the dual body of $K$. 
	\begin{enumerate}[(i)]
		\item For $0<k\leq n$, there hold
	\begin{align}
	\tan\(\zeta_n(K)\)\cdot \tan\(\zeta_k(K^\ast)\) \geq& 1,    \label{s1:cor-sphere-ineq-1}\\
	\tan\(\zeta_{n-k}(K)\) \cdot \tan\(\zeta_0(K^\ast)\)\leq & 1. \label{s1:cor-sphere-ineq-2}  
	\end{align}
        \item For $0\leq l\leq n$, if $0\leq 2i\leq l$ and $0\leq 2j\leq n-l$, then there hold
	\begin{align}
	\tan\(\zeta_{n-l+2i}(K)\)\cdot \tan\(\zeta_l(K^\ast)\) \geq& 1, \label{s1:cor-sphere-ineq-3} \\ \tan\(\zeta_{n-l-2j}(K)\)\cdot\tan\(\zeta_l(K^\ast)\) \leq& 1. \label{s1:cor-sphere-ineq-4}
	\end{align}
\end{enumerate}
	Equality holds if and only if $K$ is a geodesic ball. 
\end{thm}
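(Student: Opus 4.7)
The plan is to deduce all four inequalities from the identity of Theorem~\ref{thm-sphere} combined with the quermassintegral (Alexandrov--Fenchel type) inequalities for smooth strictly convex domains in $\mathbb{S}^{n+1}$. Applying \eqref{s1:identity-sphere} to $K$ and also to its dual $K^\ast$ (via the involution $(K^\ast)^\ast = K$) yields
\[
\tan\bigl(\zeta_k(K^\ast)\bigr)=\cot\bigl(\zeta_{n-k}(K)\bigr) \quad\text{and}\quad \tan\bigl(\zeta_0(K^\ast)\bigr)=\cot\bigl(\zeta_n(K)\bigr),
\]
together with the analogous formulae at every other relevant index. Substituting these into \eqref{s1:cor-sphere-ineq-1}--\eqref{s1:cor-sphere-ineq-4} clears the dual body $K^\ast$ entirely from each of the four statements.

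Since a smooth bounded strictly convex domain in $\mathbb{S}^{n+1}$ lies in an open hemisphere, its mean radii $\zeta_k(K)$ all take values in $(0,\pi/2)$, where $\tan$ is strictly increasing. Hence, after the substitution, the two inequalities in part (i) both reduce to the single comparison $\zeta_{n-k}(K)\leq \zeta_n(K)$ for $0<k\leq n$, while the two inequalities in part (ii) reduce respectively to $\zeta_{n-l+2i}(K)\geq \zeta_{n-l}(K)$ and $\zeta_{n-l-2j}(K)\leq \zeta_{n-l}(K)$, with even non-negative gaps $2i$ and $2j$. The parity condition imposed in (ii) is visibly exactly the restriction coming from the even-step Alexandrov--Fenchel inequality for quermassintegrals in the sphere.

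It remains to invoke the appropriate monotonicity of the mean radius in $\mathbb{S}^{n+1}$: the even-step inequality $\zeta_k(K)\leq \zeta_{k+2}(K)$, iterated, yields (ii); the inequality $\zeta_k(K)\leq \zeta_n(K)$ for every $0\leq k<n$ yields (i). Both hold with equality if and only if $K$ is a geodesic ball, and since the identity \eqref{s1:identity-sphere} itself is no restriction, the equality case of the theorem is inherited from that of these Alexandrov--Fenchel type inequalities. The main technical point therefore lies outside the present argument: one needs the corresponding quermassintegral inequalities in the sphere, and in particular the variant required in (i) where the gap between the endpoints $k$ and $n$ may be odd; once these are granted from the earlier literature or an auxiliary inverse curvature flow argument, the rest of the proof is purely algebraic manipulation of the duality identity.
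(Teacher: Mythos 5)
Your proposal is correct and follows essentially the same route as the paper: combine the identity of Theorem~\ref{thm-sphere} with the quermassintegral inequalities for strictly convex domains in $\mathbb S^{n+1}$ (the endpoint inequality $\zeta_n(K)\geq\zeta_l(K)$ for part~(i) and the even-step inequality $\zeta_{m+1}(K)\geq\zeta_{m-1}(K)$ for part~(ii), both taken from the earlier literature), and inherit the equality characterization from those. The paper applies one quermassintegral inequality to $K$ and a dual one to $K^\ast$; you instead eliminate $K^\ast$ everywhere via the duality identity, which is a clean streamlining but is consistent with the paper's own remark that \eqref{quermassintegral-ineq-sphere-1} and \eqref{quermassintegral-ineq-sphere-2} are equivalent under Theorem~\ref{thm-sphere}.
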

\begin{rem}
	It should be mentioned that Gao, Hug and Schneider \cite[P. 166]{GHS03} proved the following Blaschke-Santal\'o type inequality in the sphere: Let $K$ be a convex body in $\mathbb S^{n+1}$ and $K^\ast$ the dual body of $K$. Let $B_r$ be a geodesic ball with $\Vol(K)=\Vol(B_r)$ and $B_r^\ast=B_{\frac{\pi}{2}-r}$ the dual body of $B_r$. Then $\Vol(K^\ast)\leq \Vol(B_r^\ast)$, with equality holds if and only if $K$ is a geodesic ball. This inequality is equivalent to
    \begin{align*}
    \tan\(\zeta_0(K)\) \cdot \tan(\zeta_0(K^\ast)) \leq 1,
    \end{align*}
    which is a special case ($k=n$) of the inequality \eqref{s1:cor-sphere-ineq-2}.
\end{rem}

Similarly, we apply Theorem \ref{thm-hyperbolic} to obtain the complete family of Blaschke-Santal\'o type inequalities in hyperbolic/de Sitter space.
\begin{thm}\label{cor-BS-ineq-hyperbolic-extension}
	Let $K$ be a smooth bounded h-convex\footnote{A smooth bounded domain in hyperbolic space is called {\em h-convex} (which is short for {\em horospherically convex}) if the principal curvatures of its boundary satisfy $\k_i\geq 1$ for all $i$.} domain in $\mathbb H^{n+1}$ and $K^{\ast}$ the dual body of $K$ in $\mathbb S^{n,1}_{+}$. Then for any $0\leq k,l\leq n$, there hold
\begin{align}\label{s1:cor-hyperbolic-ineq-1}
\coth(\zeta_{k}(K)) \cdot \tanh(\zeta_l(K^\ast)) \geq 1, \quad \text{if $k+l<n$},
\end{align}
and
\begin{align}\label{s1:cor-hyperbolic-ineq-2}
\coth(\zeta_{k}(K)) \cdot \tanh(\zeta_l(K^\ast)) \leq 1, \quad \text{if $k+l>n$}.
\end{align}
Equality holds if and only if $K$ is a geodesic ball. 
\end{thm}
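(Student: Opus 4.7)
The plan is to deduce Theorem \ref{cor-BS-ineq-hyperbolic-extension} from the identity in Theorem \ref{thm-hyperbolic} combined with the quermassintegral inequalities for h-convex domains in $\mathbb H^{n+1}$ (one of the other main results announced in the introduction). On inverting $\coth = 1/\tanh$, the identity \eqref{s1:identity-hyperbolic} is equivalent to the remarkably clean statement $\zeta_k(K) = \zeta_{n-k}(K^\ast)$ for all $0 \leq k \leq n$; the quermassintegral inequalities in turn give the monotonicity of $\zeta_k(K)$ in $k$. Together, these two ingredients collapse the entire family \eqref{s1:cor-hyperbolic-ineq-1}--\eqref{s1:cor-hyperbolic-ineq-2} to a single elementary comparison, so no genuinely new geometric input is required at this stage.

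More precisely, I would apply \eqref{s1:identity-hyperbolic} with the index $k$ replaced by $n-l$, obtaining $\tanh(\zeta_l(K^\ast)) = \tanh(\zeta_{n-l}(K))$, and substitute to get
\begin{align*}
\coth(\zeta_k(K)) \cdot \tanh(\zeta_l(K^\ast)) = \frac{\tanh(\zeta_{n-l}(K))}{\tanh(\zeta_k(K))}.
\end{align*}
Since $\tanh$ is strictly increasing on $(0,\infty)$, the inequality \eqref{s1:cor-hyperbolic-ineq-1} reduces to $\zeta_{n-l}(K) \geq \zeta_k(K)$ when $k+l<n$ (that is, $k < n-l$), while \eqref{s1:cor-hyperbolic-ineq-2} reduces to $\zeta_{n-l}(K) \leq \zeta_k(K)$ when $k+l>n$ (that is, $n-l < k$). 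Both reductions are delivered directly by the hyperbolic quermassintegral inequalities $\zeta_a(K) \leq \zeta_b(K)$ for $0 \leq a \leq b \leq n$, with equality if and only if $K$ is a geodesic ball; the rigidity clause then propagates to the equality case of \eqref{s1:cor-hyperbolic-ineq-1}--\eqref{s1:cor-hyperbolic-ineq-2}, since in both regimes $k \neq n-l$ under the hypotheses.

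The main obstacle, in the broader context, does not lie in this deduction but in securing the input quermassintegral inequalities in full generality, for all h-convex domains and across the entire index range $0 \leq k \leq n$; that step is handled elsewhere in the paper by curvature-flow methods. Granted those, the present theorem becomes an essentially algebraic corollary of Theorem \ref{thm-hyperbolic}, and the only point demanding care is the bookkeeping of the strict inequality $k \neq n-l$ in the equality analysis, which is automatic from the hypotheses $k+l<n$ or $k+l>n$.
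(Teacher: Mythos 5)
Your proposal is correct and follows essentially the same route as the paper: both apply the identity \eqref{s1:identity-hyperbolic} (shifted to index $n-l$) and then invoke the h-convex quermassintegral inequalities \eqref{quermassintegral-ineq-hyperbolic} to compare $\zeta_k(K)$ with $\zeta_{n-l}(K)$, with the rigidity clause coming from the strict index gap $k\neq n-l$. The only cosmetic difference is that the paper argues via the monotonicity of $\coth$ rather than rewriting the product as $\tanh(\zeta_{n-l}(K))/\tanh(\zeta_k(K))$; also note that the input inequality \eqref{quermassintegral-ineq-hyperbolic} is cited from Wang--Xia and Hu--Li--Wei rather than proved in this paper.
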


Another application of Theorem \ref{thm-hyperbolic} is to prove geometric inequalities in hyperbolic/de Sitter space. We first obtain the following quermassintegral inequalities in hyperbolic space.
\begin{thm}\label{cor-BS-ineq-hyperbolic-quermassintegral-ineq}
Let $K$ is a smooth bounded and strictly convex domain in $\mathbb H^{n+1}$. Then there hold
\begin{align}\label{quermassintegral-ineq-hyperbolic-strictly-convex-II}
W_{n-1}(K) \geq f_{n-1} \circ f^{-1}_{l}(W_l(K)), \quad  0\leq l<n-1.
\end{align}
Equality holds if and only if $K$ is a geodesic ball.
\end{thm}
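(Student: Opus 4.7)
\noindent\textbf{Proof plan for Theorem \ref{cor-BS-ineq-hyperbolic-quermassintegral-ineq}.}
The plan is to exploit the duality identity of Theorem \ref{thm-hyperbolic} to transport the problem to the dual body $K^* \subset \mathbb S^{n,1}_+$, where the desired inequality reduces to a chain of step-wise quermassintegral inequalities accessible through a curvature-flow argument in de Sitter space. Since $f_{n-1}$ is strictly monotone, the inequality \eqref{quermassintegral-ineq-hyperbolic-strictly-convex-II} is first recast in the mean-radius form
\begin{equation*}
\zeta_{n-1}(K) \geq \zeta_l(K), \qquad 0 \leq l < n-1.
\end{equation*}

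For positive reals $a,b$, the elementary identity $\coth(a)\tanh(b)=1 \Leftrightarrow a=b$ shows that Theorem \ref{thm-hyperbolic} in fact gives the pointwise equality $\zeta_k(K) = \zeta_{n-k}(K^*)$ for every $k=0,1,\dots,n$. Applying this at $k = n-1$ and at $k = l$ converts the target inequality into the equivalent dual statement
\begin{equation*}
\zeta_1(K^*) \geq \zeta_{n-l}(K^*), \qquad 1 < n-l \leq n,
\end{equation*}
so the proof reduces to a monotonicity-in-index statement for the mean radii of $K^*$ on the de Sitter side.

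Because $K \subset \mathbb H^{n+1}$ is smooth and strictly convex, the dual $K^* \subset \mathbb S^{n,1}_+$ is a smooth bounded spacelike hypersurface whose principal curvatures are the reciprocals of those of $\partial K$; in particular $\partial K^*$ is strictly convex in the spacelike sense. On such hypersurfaces I would run a locally constrained inverse curvature flow in $\mathbb S^{n,1}_+$ designed to preserve $W_1(K^*)$ while driving the other $W_k(K^*)$ monotonically, with the round spacelike slice as its long-time limit. Monotonicity along the flow yields $\zeta_1(K^*) \geq \zeta_k(K^*)$ for every $k \geq 1$, and, in particular, for $k = n-l$; equality at any time forces the flow to be stationary, so $K^*$ is a geodesic ball in $\mathbb S^{n,1}_+$, which by duality forces $K$ to be a geodesic ball in $\mathbb H^{n+1}$.

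The main obstacle is the last step: constructing a locally constrained inverse curvature flow in the Lorentzian ambient $\mathbb S^{n,1}_+$, verifying that spacelike strict convexity is preserved along the flow, and establishing long-time existence together with smooth convergence to a round slice. This essentially requires transplanting the Brendle--Guan--Li/Wang--Xia program from the Riemannian space forms to the de Sitter setting, where the Lorentzian signature demands a careful choice of speed function and independent barrier arguments to rule out blow-up of the principal curvatures.
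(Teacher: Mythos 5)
Your reduction is correct and useful: from Theorem~\ref{thm-hyperbolic} one indeed gets $\zeta_k(K)=\zeta_{n-k}(K^\ast)$ for all $k$, so the target $\zeta_{n-1}(K)\geq\zeta_l(K)$ is equivalent to $\zeta_1(K^\ast)\geq\zeta_{n-l}(K^\ast)$ for $1<n-l\leq n$ on the de Sitter side. That part matches one ingredient of the paper's own argument (the $k=1$ step). But the proposal then rests entirely on a flow you have not constructed: a locally constrained inverse curvature flow in $\mathbb S^{n,1}_+$ that preserves $W_1$, keeps the hypersurface spacelike and strictly convex, and converges to a round slice, with the right monotonicity of $W_{n-l}$. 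This is exactly the content of Conjecture~\ref{s1:conjecture} (beyond the $k\leq 2$ cases of Lambert--Scheuer and Scheuer), and you yourself label it ``the main obstacle.'' As written, this is a genuine gap: nothing in the known de Sitter literature delivers $\zeta_1(K^\ast)\geq\zeta_{n-l}(K^\ast)$ for general $n-l>2$ under merely strict convexity. Worse, the paper's own $\zeta_1(K^\ast)\geq\zeta_{n-l}(K^\ast)$ (inequality~\eqref{s2:quermassintegral-de-Sitter-ineq-I} in Theorem~\ref{cor-BS-ineq-deSitter-quermassintegral-ineq-II}) is deduced \emph{from} the theorem you are trying to prove, so leaning on it would be circular.

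The paper avoids this obstacle by splitting the chain $\zeta_{n-1}(K)\geq\zeta_l(K)$ by parity. The odd-distance drops $\zeta_{n-1}(K)\geq\zeta_{n-1-2i}(K)$ are imported wholesale from \cite{AHL2020} (inequality~\eqref{quermassintegral-ineq-hyperbolic-strictly-convex-III}), which holds for strictly convex domains in $\mathbb H^{n+1}$. For the even drops, only the first step $\zeta_{n-1}(K)\geq\zeta_{n-2}(K)$ is obtained via duality plus Scheuer's Minkowski inequality \eqref{quermassintegral-ineq-deSitter-II} in de Sitter space — precisely the single de Sitter inequality that is actually available. The remaining even drops are then established by induction, running the harmonic mean curvature flow in $\mathbb H^{n+1}$ (not in de Sitter space), computing $\frac{d}{dt}W_{n-1}$ and $\frac{d}{dt}W_{n-2k}$ with the variational formula~\eqref{s3:variational-formula}, applying the Newton--MacLaurin inequality to compare $E_{n-2k}E_n/E_{n-1}$ with $E_{n-2k+1}$, and closing an ODE/Gr\"onwall comparison using the fact that $W_m(K_t)\to 0$ as $M_t$ shrinks to a round point. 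If you want to salvage your plan, you would need to replace the de Sitter flow step by one of the actually available inputs; as it stands, the proposal pushes the hard analytic work into a step that is at present an open problem.
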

\begin{rem}
	The full quermassintegral inequalities in hyperbolic space
	\begin{align}\label{s1:full-quermassintegral-ineq-hyperbolic-space}
	W_k(K) \geq f_k\circ f_l^{-1}(W_{l}(K)), \quad 0\leq l<k\leq n
	\end{align}
	were first proved by Wang and Xia \cite{Wang-Xia14} for smooth bounded h-convex domains, see also a new proof by the authors of this paper with Wei \cite{Hu-Li-Wei2020}. It is challenging to extend these inequalities to merely $(k-1)$-convex\footnote{A smooth hypersurface is called {\em $m$-convex} if its principal curvatures satisfy $\k\in \G_{m}^{+}=\{\k \in \mathbb R^n ~|~ E_i(\k)>0, 1\leq i\leq m \}$. $n$-convex is strictly convex and $1$-convex is usually refered as {\em mean convex}.} and starshaped hypersurfaces in hyperbolic space. In this direction, the second author with Wei and Xiong \cite{LWX14} proved the inequality \eqref{s1:full-quermassintegral-ineq-hyperbolic-space} with $k=3$ and $l=1$ for $2$-convex and star-shaped hypersurfaces. Later, Andrews, Chen and Wei \cite{ACW2018} proved the inequality \eqref{s1:full-quermassintegral-ineq-hyperbolic-space} with $0<k\leq n$ and $l=0$ for hypersurfaces with positive sectional curvature\footnote{A smooth hypersurface in hyperbolic space has {\em positive sectional curvature} if its principal curvatures satisfy $\k_i\k_j>1$ for all distant $i,j$.}. Recently, Brendle, Guan and Li \cite{BGL19} (see also \cite{GL21}) proved the inequality \eqref{s1:full-quermassintegral-ineq-hyperbolic-space} with $k=2$ and $l=1$ for mean convex and star-shaped hypersurface, and the inequality \eqref{s1:full-quermassintegral-ineq-hyperbolic-space} with $k=n$ and $0\leq l<n$ for strictly convex hypersurfaces. The authors of this paper with Andrews proved the inequality \eqref{s1:full-quermassintegral-ineq-hyperbolic-space} with $k=n-1$ and $l=n-1-2i (0<2i<n)$ for strictly convex hypersurfaces. 
\end{rem}

Several special cases of the Blaschke-Santal\'o type inequalities in Theorem \ref{cor-BS-ineq-hyperbolic-extension} can be generalized to the strictly convex domains in hyperbolic/de Sitter space.
\begin{thm}\label{cor-BS-ineq-hyperbolic-extension-II}
	Let $K$ be a smooth bounded and strictly convex domain in $\mathbb S^{n+1}$ and $K^\ast$ the dual body of $K$. Then there hold
	\begin{align}
	\coth(\zeta_{l}(K)) \cdot \tanh(\zeta_1(K^\ast)) \geq &1,  \quad 0\leq l<n-1,  \label{s1:hyperbolic-ineq-1}
	\end{align}
	and
	\begin{align}
	\coth(\zeta_{k}(K)) \cdot \tanh(\zeta_0(K^\ast)) \geq &1,  \quad 0\leq k<n.  \label{s1:hyperbolic-ineq-2}
	\end{align}
	Equality holds if and only if $K$ is a geodesic ball.
\end{thm}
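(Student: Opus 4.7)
The plan is to reduce each of the two inequalities to a purely intrinsic quermassintegral inequality for strictly convex domains in $\mathbb{H}^{n+1}$ by inverting the duality identity in Theorem \ref{thm-hyperbolic}.

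First, I would specialize the identity \eqref{s1:identity-hyperbolic} at $k=n-1$ and $k=n$ to obtain
\begin{equation*}
\coth(\zeta_{n-1}(K)) \cdot \tanh(\zeta_1(K^\ast)) = 1, \qquad \coth(\zeta_n(K)) \cdot \tanh(\zeta_0(K^\ast)) = 1.
\end{equation*}
Substituting these into \eqref{s1:hyperbolic-ineq-1} and \eqref{s1:hyperbolic-ineq-2} respectively and using that $\coth$ is strictly decreasing on $(0,\infty)$, the two inequalities become
\begin{equation*}
\zeta_l(K) \le \zeta_{n-1}(K) \ (0\le l<n-1), \qquad \zeta_k(K) \le \zeta_n(K) \ (0\le k<n).
\end{equation*}
By the definition of the mean radius in \eqref{s3:def-mean-radius-spaceforms} and the strict monotonicity of $f_j$, these are in turn equivalent to
\begin{equation*}
W_{n-1}(K) \ge f_{n-1}\circ f_l^{-1}(W_l(K)), \qquad W_n(K) \ge f_n\circ f_k^{-1}(W_k(K)).
\end{equation*}

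I would then invoke Theorem \ref{cor-BS-ineq-hyperbolic-quermassintegral-ineq} to dispose of the first family, and the quermassintegral inequality for strictly convex domains comparing $W_n$ to $W_k$ ($0\le k<n$) due to Brendle, Guan and Li \cite{BGL19} (as recorded in the remark following Theorem \ref{cor-BS-ineq-hyperbolic-quermassintegral-ineq}) for the second. The equality characterization by geodesic balls is inherited from those references and is preserved by the chain of equivalences above, since each step uses a strictly monotone substitution.

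There is no genuine analytic obstacle once Theorem \ref{thm-hyperbolic} and the two quermassintegral inequalities are in hand; the point of the proof is exactly that the duality identity converts Blaschke--Santal\'o type statements about the pair $(K,K^\ast)$ into intrinsic inequalities for $K$ alone, which is why one recovers inequalities for merely strictly convex $K$ whenever the corresponding intrinsic quermassintegral comparison is known in that class. The hardest input is therefore external: the strictly convex case of the $W_n$ versus $W_k$ inequality from \cite{BGL19}; the restriction to the index ranges $k+l=n-1$ with $l=1$ or $l=0$ in the statement is precisely dictated by which intrinsic inequalities are currently available beyond the h-convex setting.
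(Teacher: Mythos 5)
Your proposal matches the paper's proof exactly: reduce via the duality identity \eqref{s1:identity-hyperbolic} at $k=n-1$ and $k=n$, invoke Theorem \ref{cor-BS-ineq-hyperbolic-quermassintegral-ineq} for $W_{n-1}$ versus $W_l$, and invoke the strictly convex $W_n$ versus $W_k$ inequality of \cite{BGL19} (also \cite{AHL2020}) for the second family, with the equality case inherited from those inputs. You also correctly read the ambient space as $\mathbb H^{n+1}$ despite the typo ``$\mathbb S^{n+1}$'' in the theorem statement.
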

\begin{rem}
	The inequality \eqref{s1:hyperbolic-ineq-2} with $k=0$ is equivalent to the following Blachke-Santal\'o type inequality which was previously proved by Gao, Hug and Schneider \cite{GHS03} via two-point symmetrization: Let $K$ be a convex body in hyperbolic space and $K^\ast$ the dual body in de Sitter space. Let $B_r$ be a geodesic ball with $\Vol(K)=\Vol(B_r)$ and $B_r^\ast$ the dual body of $B_r$. Then $\Vol(K^\ast)\leq \Vol(B_r^\ast)$, with equality holds if and only if $K$ is a geodesic ball.
\end{rem}

Motivated by the quermassintegral inequalities in hyperbolic space, it is natural to investigate the following quermassintegral inequalities in de Sitter space.
\begin{conj}\label{s1:conjecture}
Let $K$ be a smooth bounded domain with $(k-1)$-convex and spacelike boundary $\partial K$ in $\mathbb S_{+}^{n,1}$. Then there hold	
\begin{align}\label{s2:quermassintegral-ineq-conjecture}
W_k(K)\leq f_k \circ f_l^{-1}(W_l(K)), \quad 0\leq l<k\leq n. 
\end{align}
Equality holds if and only if $\partial K$ is isometric to a coordinate slice.	
\end{conj}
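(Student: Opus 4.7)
The plan is to attack Conjecture \ref{s1:conjecture} by exploiting the duality identity of Theorem \ref{thm-hyperbolic} to reduce \eqref{s2:quermassintegral-ineq-conjecture} to the hyperbolic quermassintegral inequality \eqref{s1:full-quermassintegral-ineq-hyperbolic-space}. Given a smooth bounded $(k-1)$-convex spacelike domain $K\subset\mathbb{S}^{n,1}_{+}$, I would first form its polar $K^{\ast}\subset\mathbb{H}^{n+1}$ via \eqref{s2:dual-body-deSitter}. Since $\coth$ and $\tanh$ are mutually reciprocal, Theorem \ref{thm-hyperbolic} is equivalent to the duality $\zeta_{n-j}(K)=\zeta_{j}(K^{\ast})$ for each $j=0,1,\dots,n$, and the target inequality \eqref{s2:quermassintegral-ineq-conjecture}---in its equivalent form $\zeta_{k}(K)\leq\zeta_{l}(K)$ for $l<k$---becomes $\zeta_{n-k}(K^{\ast})\leq\zeta_{n-l}(K^{\ast})$, which is precisely \eqref{s1:full-quermassintegral-ineq-hyperbolic-space} applied to $K^{\ast}$ with indices $n-k<n-l$.

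To make this reduction rigorous I would carry out three intermediate steps. First, I would write the polarity explicitly through the Gauss map and compute how the principal curvatures of $\partial K$ and $\partial K^{\ast}$ are related; this should produce a rational substitution involving hyperbolic factors of the respective support functions, under which $(k-1)$-convexity of $\partial K$ translates into an admissible convexity class for $\partial K^{\ast}$. Second, I would verify that the de Sitter quermassintegrals from \eqref{s1:quermassintegral-def-de-Sitter} pair with the hyperbolic ones under the polarity, using divergence-type identities analogous to those underlying the proofs of Theorems \ref{thm-sphere}--\ref{thm-hyperbolic}. Third, I would transfer the rigidity: equality in \eqref{s1:full-quermassintegral-ineq-hyperbolic-space} forces $K^{\ast}$ to be a hyperbolic geodesic ball, which under the polarity corresponds to $\partial K$ being a geodesic slice of $\mathbb{S}^{n,1}_{+}$, i.e.\ isometric to a coordinate slice, exactly as conjectured.

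The main obstacle is the appeal to \eqref{s1:full-quermassintegral-ineq-hyperbolic-space} itself. In its full generality this hyperbolic inequality is established only for h-convex domains, while the polar of a merely $(k-1)$-convex spacelike body in de Sitter will typically enjoy only some weaker convexity in $\mathbb{H}^{n+1}$---exactly the regime in which \eqref{s1:full-quermassintegral-ineq-hyperbolic-space} remains open beyond the partial results \cite{LWX14,ACW2018,BGL19,Hu-Li-Wei2020}. Thus duality alone does not close the conjecture; a direct curvature flow attack in $\mathbb{S}^{n,1}_{+}$ seems unavoidable. A natural candidate is a locally constrained, inverse-type flow of spacelike hypersurfaces preserving $W_{l}$ while monotonically increasing $W_{k}$, with smooth convergence to a coordinate slice under the $(k-1)$-convexity hypothesis, in the spirit of the methods of \cite{BGL19,GL21,Hu-Li-Wei2020} adapted to the Lorentzian ambient. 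Either approach---duality combined with new hyperbolic quermassintegral inequalities, or a Lorentzian locally constrained flow---would settle the conjecture, but both face substantial regularity and preserved-convexity hurdles that are currently the primary bottleneck.
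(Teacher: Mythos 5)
The statement in question is explicitly labelled a \emph{conjecture} in the paper: the authors do not prove it, but instead prove the restricted Theorem \ref{cor-BS-ineq-deSitter-quermassintegral-ineq} (under the much stronger hypothesis $0<\k_i\leq 1$) and Theorem \ref{cor-BS-ineq-deSitter-quermassintegral-ineq-II} (for special index pairs, under strict convexity). Your analysis is therefore in agreement with the paper in its overall conclusion. The duality reduction you describe---rewriting Theorem \ref{thm-hyperbolic} as $\zeta_{n-j}(K)=\zeta_j(K^\ast)$ and translating \eqref{s2:quermassintegral-ineq-conjecture} for $K\subset\mathbb S^{n,1}_{+}$ into $\zeta_{n-k}(K^\ast)\leq\zeta_{n-l}(K^\ast)$ for the hyperbolic polar $K^\ast$---is precisely the mechanism the paper uses for both partial results, and you correctly identify the main bottleneck: the full hyperbolic quermassintegral inequality \eqref{s1:full-quermassintegral-ineq-hyperbolic-space} is established in complete generality only for h-convex domains. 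Theorem \ref{cor-BS-ineq-deSitter-quermassintegral-ineq} succeeds exactly because $0<\k_i\leq 1$ forces $\k_i^\ast=\k_i^{-1}\geq 1$, so the polar is h-convex.

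There is, however, a further obstruction that your proposal passes over without comment. You begin by ``forming the polar $K^\ast\subset\mathbb H^{n+1}$'', but the polar construction of Theorem \ref{s2:thm-duality-de-Sitter-space} is only defined for \emph{strictly convex} (i.e.\ $n$-convex) spacelike hypersurfaces: strict convexity is what makes the Gauss map $X=X(\xi)$ an embedding of a hypersurface $M\subset\mathbb H^{n+1}$. Conjecture \ref{s1:conjecture} assumes only $(k-1)$-convexity with $k\leq n$, i.e.\ at most $(n-1)$-convexity, which does not imply $\k_i>0$ for all $i$. So for the conjectural class of domains the dual body $K^\ast$ need not exist, and the duality approach fails at step one, before any hyperbolic quermassintegral inequality is even invoked. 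Thus the paper's Theorems \ref{cor-BS-ineq-deSitter-quermassintegral-ineq} and \ref{cor-BS-ineq-deSitter-quermassintegral-ineq-II} both impose convexity hypotheses strictly stronger than the conjecture precisely to make the Gauss map available; and, as you note, closing the gap to $(k-1)$-convexity would require a direct Lorentzian flow argument in $\mathbb S^{n,1}_{+}$ that has not yet been constructed. Your assessment that the conjecture remains open, and for essentially the reasons you give, is accurate.
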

\begin{rem}
Two special cases of Conjecture \ref{s1:conjecture} have been completely solved: 
\begin{enumerate}[(i)]
	\item The inequality \eqref{s2:quermassintegral-ineq-conjecture} with $k=1$, $l=0$ was verified by Lambert and Scheuer \cite{Lambert-Scheuer2021}, who proved the isoperimetric inequalities for spacelike domains in generalized Robertson-Walker spaces including de Sitter space.
	\item The inequality \eqref{s2:quermassintegral-ineq-conjecture} with $k=2$, $l=1$ was recently proved by Scheuer \cite{Scheuer19}.
\end{enumerate}
\end{rem}
As an application of Theorem \ref{thm-hyperbolic}, we prove the full quermassintegral inequalities \eqref{s2:quermassintegral-ineq-conjecture} under the stronger assumption.
\begin{thm}\label{cor-BS-ineq-deSitter-quermassintegral-ineq}
	Let $K$ be a smooth bounded domain with spacelike boundary $\partial K$ in $\mathbb S_{+}^{n,1}$. Assume that the principal curvatures of $\partial K$ satisfy $0<\k_i\leq 1$. Then there hold	\begin{align}\label{s2:quermassintegral-ineq-I}
	W_k(K)\leq f_k \circ f_l^{-1}(W_l(K)), \quad 0\leq l<k\leq n. 
	\end{align}
	Equality holds if and only if $\partial K$ is isometric to a coordinate slice.
\end{thm}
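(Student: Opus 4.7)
The plan is to reduce the inequality to the h-convex quermassintegral inequalities of Wang and Xia \eqref{s1:full-quermassintegral-ineq-hyperbolic-space} in $\mathbb{H}^{n+1}$ via the duality identity from Theorem \ref{thm-hyperbolic}. Let $\tilde K := K^\ast \subset \mathbb{H}^{n+1}$ denote the dual body of $K$. Since the Gauss map duality is involutive, applying Theorem \ref{thm-hyperbolic} to $\tilde K$ (whose dual is $K$) yields, for each $k \in \{0,1,\dots,n\}$,
\begin{equation*}
\coth(\zeta_k(\tilde K)) \cdot \tanh(\zeta_{n-k}(K)) = 1,
\end{equation*}
equivalently $\zeta_{n-k}(K) = \zeta_k(\tilde K)$.

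The first substantive step is to verify that $\tilde K$ is h-convex under the hypothesis $0 < \kappa_i \leq 1$. Using the Lorentzian Gauss map construction, if $X$ parametrizes $\partial K \subset \mathbb{S}_+^{n,1}$ with future timelike unit normal $N \in \mathbb{H}^{n+1}$, then $\partial \tilde K$ is parametrized by $\tilde X = N$ with spacelike outward normal $\tilde N = X$. The Weingarten relations on both sides then give that the shape operators are mutually inverse, so the principal curvatures transform as $\tilde \kappa_i = 1/\kappa_i$. (As a sanity check, the coordinate slice in $\mathbb{S}_+^{n,1}$ with $\kappa_i = \tanh s$ is dual to the geodesic sphere of radius $s$ in $\mathbb{H}^{n+1}$ with $\tilde \kappa_i = \coth s$.) Hence $0 < \kappa_i \leq 1$ is equivalent to $\tilde \kappa_i \geq 1$, i.e.\ $\tilde K$ is h-convex.

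With $\tilde K$ h-convex, the Wang--Xia inequality \eqref{s1:full-quermassintegral-ineq-hyperbolic-space} gives $W_k(\tilde K) \geq f_k \circ f_l^{-1}(W_l(\tilde K))$ for $0 \leq l < k \leq n$, with equality iff $\tilde K$ is a geodesic ball. By the monotonicity of $f_k$ in $\mathbb{H}^{n+1}$, this is equivalent to $\zeta_k(\tilde K) \geq \zeta_l(\tilde K)$. Inserting $\zeta_j(\tilde K) = \zeta_{n-j}(K)$ from the previous step, this becomes $\zeta_{n-k}(K) \geq \zeta_{n-l}(K)$; relabeling $l' = n-k$ and $k' = n-l$ so that $l' < k'$, we obtain $\zeta_{l'}(K) \geq \zeta_{k'}(K)$, which by the corresponding monotonicity in $\mathbb{S}_+^{n,1}$ is exactly the desired inequality $W_{k'}(K) \leq f_{k'} \circ f_{l'}^{-1}(W_{l'}(K))$. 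The equality case $\tilde K =$ geodesic ball corresponds under the Gauss map duality to $\partial K$ being isometric to a coordinate slice.

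The main obstacle is the curvature transfer in the second paragraph: one must pin down the sign and normalization conventions of the Gauss map between $\mathbb{S}_+^{n,1}$ and $\mathbb{H}^{n+1}$ carefully, and confirm that $\tilde K$ is a genuine smooth strictly convex (in fact h-convex) domain in $\mathbb{H}^{n+1}$ whenever $\partial K$ has $0 < \kappa_i \leq 1$. Once this correspondence is in hand, the remainder of the argument becomes a mechanical reindexing driven by Theorem \ref{thm-hyperbolic} together with the hyperbolic quermassintegral inequality for h-convex domains.
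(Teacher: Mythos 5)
Your proposal is correct and follows essentially the same argument as the paper's proof: pass to the dual body $K^\ast \subset \mathbb H^{n+1}$, observe that it is h-convex, apply the Wang--Xia quermassintegral inequalities there, and transfer back via the identity of Theorem \ref{thm-hyperbolic}. The only differences are cosmetic---you phrase the duality identity as an equality of mean radii $\zeta_k(\tilde K) = \zeta_{n-k}(K)$ rather than through the $\coth\cdot\tanh$ product, and you spell out the verification that $0<\kappa_i\leq 1$ on $\partial K$ gives $\tilde\kappa_i = 1/\kappa_i \geq 1$ on $\partial\tilde K$, a point the paper simply asserts.
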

The inequality \eqref{s2:quermassintegral-ineq-I} with $1<k\leq n$ and $l=1$ holds for strictly convex hypersurfaces in $\mathbb S_{+}^{n,1}$.
\begin{thm}\label{cor-BS-ineq-deSitter-quermassintegral-ineq-II}
	Let $K$ be a smooth bounded domain with strictly convex and spacelike boundary in $\mathbb S^{n,1}_{+}$. Then there hold
	\begin{align}\label{s2:quermassintegral-de-Sitter-ineq-I}
	W_k(K) \leq f_k\circ f_1^{-1}(W_1(K)), \quad  1<k\leq n,
	\end{align}
	and 
	\begin{align}
	W_{n-1}(K) \leq  &f_{n-1}\circ f_{n-2}^{-1}(W_{n-2}(K)), \label{s2:quermassintegral-de-Sitter-ineq-II}\\ W_{n-1}(K) \leq  &f_{n-1}\circ f_{n-3}^{-1}(W_{n-3}(K)). \label{s2:quermassintegral-de-Sitter-ineq-III}
	\end{align}
	Equality holds if and only if $\partial K$ is isometric to a coordinate slice.
\end{thm}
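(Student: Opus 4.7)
The plan is to reduce each of the three inequalities to a previously established quermassintegral inequality in hyperbolic space via the duality identity of Theorem~\ref{thm-hyperbolic}. The starting observation is that, since $\coth(a)\tanh(b)=1$ if and only if $a=b$ for positive $a,b$, the identity
\begin{align*}
\coth(\zeta_k(K))\cdot\tanh(\zeta_{n-k}(K^\ast))=1
\end{align*}
from Theorem~\ref{thm-hyperbolic} is equivalent to $\zeta_k(K)=\zeta_{n-k}(K^\ast)$ for each $k$. Hence, writing $K\subset\mathbb S_{+}^{n,1}$ for the given strictly convex spacelike domain and $\widetilde K:=K^\ast\subset\mathbb H^{n+1}$ for its dual body, I obtain
\begin{align*}
\zeta_j(K)=\zeta_{n-j}(\widetilde K),\qquad j=0,1,\ldots,n.
\end{align*}

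Since $f_k$ is strictly monotone, the target inequality $W_k(K)\leq f_k\circ f_l^{-1}(W_l(K))$ is equivalent to $\zeta_k(K)\leq \zeta_l(K)$, which via the above duality becomes the hyperbolic inequality $\zeta_{n-k}(\widetilde K)\leq \zeta_{n-l}(\widetilde K)$, i.e.\
\begin{align*}
W_{n-l}(\widetilde K)\geq f_{n-l}\circ f_{n-k}^{-1}(W_{n-k}(\widetilde K)).
\end{align*}
For \eqref{s2:quermassintegral-de-Sitter-ineq-I}, taking $l=1$ and $1<k\leq n$ yields $0\leq n-k<n-1$, so the dual statement is exactly Theorem~\ref{cor-BS-ineq-hyperbolic-quermassintegral-ineq} applied to $\widetilde K$. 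For \eqref{s2:quermassintegral-de-Sitter-ineq-II} and \eqref{s2:quermassintegral-de-Sitter-ineq-III} (both with $k=n-1$), the dual statements reduce to the hyperbolic Minkowski-type inequalities $W_2(\widetilde K)\geq f_2\circ f_1^{-1}(W_1(\widetilde K))$ and $W_3(\widetilde K)\geq f_3\circ f_1^{-1}(W_1(\widetilde K))$, which are known, respectively, for mean convex and star-shaped hypersurfaces by Brendle--Guan--Li \cite{BGL19} and for $2$-convex and star-shaped hypersurfaces by the second author with Wei and Xiong \cite{LWX14}. Both weaker hypotheses are implied by strict convexity of $\widetilde K$.

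The main preliminary step, and the only delicate point in the argument, is verifying that the dual body $\widetilde K=K^\ast$ is a smooth bounded strictly convex domain in $\mathbb H^{n+1}$ whenever $K$ is strictly convex spacelike in $\mathbb S_{+}^{n,1}$. This should follow from the Gauss-map construction of the dual in \S\,2.2 together with the involutivity $(K^\ast)^\ast=K$, since strict convexity of $K$ translates through the duality into positivity of all principal curvatures of $\partial\widetilde K$. Granting this, the equality characterizations are immediate: equality in each of the invoked hyperbolic inequalities forces $\widetilde K$ to be a geodesic ball in $\mathbb H^{n+1}$, and by $\zeta_j(K)=\zeta_{n-j}(\widetilde K)$ the mean radii of $K$ are then all equal, so $\partial K$ is isometric to a coordinate slice of $\mathbb S_{+}^{n,1}$.
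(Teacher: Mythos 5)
Your proposal follows essentially the same route as the paper: restate the Theorem~\ref{thm-hyperbolic} identity as $\zeta_j(K)=\zeta_{n-j}(K^\ast)$, transfer each de Sitter quermassintegral inequality to a hyperbolic one for the dual body, and then invoke Theorem~\ref{cor-BS-ineq-hyperbolic-quermassintegral-ineq} for the first inequality and the hyperbolic results \eqref{quermassintegral-ineq-hyperbolic-1-convex}, \eqref{quermassintegral-ineq-hyperbolic-2-convex} (of \cite{BGL19} and \cite{LWX14}, valid in particular for strictly convex hypersurfaces) for the remaining two; the paper carries this out directly with $\coth$/$\tanh$ monotonicity rather than rewriting the identity as an equality of mean radii, but the content is identical. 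One small imprecision: in your equality discussion, the inference ``all mean radii of $K$ are equal, so $\partial K$ is isometric to a coordinate slice'' is not a valid deduction on its own; the clean step is that once $K^\ast$ is known to be a geodesic ball in $\mathbb H^{n+1}$, its dual body is (up to a Lorentz boost) the region $\{0\le r\le r^\ast\}$ bounded by a coordinate slice, so $\partial K$ is isometric to a coordinate slice directly by the Gauss-map duality.
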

\begin{rem}
Beyond the quermassintegral inequalities, it is not difficult to apply Theorem \ref{thm-hyperbolic} to transform all geometric inequalities in hyperbolic space (e.g. \cite{GeWW14,HL19,Hu-Li-2022,Wang-Xia14,Hu-Li-Wei2020}) to their counterparts in de Sitter space under appropriate convexity assumptions. This idea was previously utilized by Andrews and the authors of this paper, see \cite[Theorem 1.4, Remark 6.4]{AHL2020}.
\end{rem}

The paper is organized as follows. In \S \ref{sec:2}, we collect the preliminaries on starshaped/spacelike hypersurface in Riemannian/Lorentzian warped product, duality and dual flows in the sphere and in hyperbolic/de Sitter space. In \S \ref{sec:3}, we give the definition of quermassintegrals and their variational formulas in Riemannian space forms and in de Sitter space. In \S \ref{sec:4}, we give the proofs of Theorem \ref{thm-sphere} and Theorem \ref{thm-hyperbolic}. In \S \ref{sec:5}, we give the proofs of Theorems \ref{cor-BS-ineq-sphere-extension}--\ref{cor-BS-ineq-deSitter-quermassintegral-ineq-II}.

\begin{ack} This work was supported by National Key R and D Program of China 2021YFA1001800, NSFC grant No.12101027, NSFC grant No.11831005 and NSFC grant No.12126405.
\end{ack}

\section{Preliminaries}\label{sec:2}
\subsection{Hypersurfaces in semi-Riemannian manifolds}
Let $(N,\-g)$ be an $(n+1)$-dimensional semi-Riemannian manifold. The coordinates $(x^\a)_{0\leq \a\leq n}$ in $N$ are labelled from $0$ to $n$. In general, the coordinate $x^0$ refers to the radial distance to a fixed point or the time function. Let $(\xi^i)$ be the local coordinates system of an open neighbourhood $U$ in $M$, where $M$ is a smooth hypersurface in $N$. Locally, $M$ can be represented by a map $X=X(\xi)=(x^\a(\xi))$, and the induced metric is given by
\begin{align*}
g_{ij}=\-g(X_i,X_j), \quad \text{where}~ X_i=\frac{\partial X}{\partial \xi^i}.
\end{align*}
Since $M$ has codimension $1$, its normal space is spanned by a single vector $\nu$ at each point $x\in M$. We can always define a continuous normal vector field $\nu$ by requiring that 
\begin{align*}
\det\(X_1,\cdots,X_n,\nu\)>0, \quad \forall \xi \in U.
\end{align*}
Moreover, the normal vector $\nu$ is normalized such that
\begin{align*}
\-g(\nu,\nu)=\s=\pm 1.
\end{align*} 

The geometry of a hypersurface $M$ in a semi-Riemannian manifold $N$ is governed by the following basic equations (see \cite[(1.1.6),(1.1.21),(1.1.36),(1.1.37)]{Gerh06}):
\begin{enumerate}[(i)]
	\item The Gauss formula is
\begin{align}\label{s2:Gauss-formula}
X_{ij}=-\s h_{ij} \nu,  
\end{align}
where $X_{ij}$ is the second covariant derivatives of $X$ with respect to the induced metric $g$, and $h_{ij}$ is called the {\em second fundamental form} of $M$ with respect to $-\s \nu$.
\item The Weingarten equation is
\begin{align}\label{s2:Weingarten-eq}
\nu_{i}=h_i^k X_k,
\end{align}
where $h_i^k=g^{kl}h_{il}$ is the Weingarten matrix, with its eigenvalues $\k=(\k_1,\cdots,\k_n)$ being called the {\em principal curvatures} of $M$ in $N$.
\item The Codazzi equation is
\begin{align}\label{s2:Codazzi-formula}
h_{ij;k}-h_{ik;j}=\-R_{\a\b\g\d} \nu^\a X_i^\b X_j^\g X_k^\d,
\end{align}
where $h_{ij;k}=\nabla_k h_{ij}$ and $\-R_{\a\b\g\d}$ is the components of Riemann curvature tensor.
\item The Gauss equation is
\begin{align}\label{s2:Gauss-equation}
R_{ijkl}=\s(h_{ik}h_{jl}-h_{jk}h_{il})+\-R_{\a\b\g\d}X_i^\a X_j^\b X_k^\g X_l^\d.
\end{align}
\end{enumerate}
\begin{rem}
	The choice of the normal $\nu$ in the Gaussian formula \eqref{s2:Gauss-formula} is free, i.e., we could just as well have replaced $\nu$ by $-\nu$, then the principal curvatures $\k_i$ would have been replaced by $-\k_i$. We make the following convention on the choice of $\nu$:
    \begin{enumerate}[(1)]
    	\item If $M$ is a closed oriented hypersurface and the ambient space is Riemannian, then in this case we always choose $\nu$ as the outward normal to $M$. Then the starshapedness of $M$ is equivalent to $\-g(\partial_r,\nu)>0$. 
    	\item If $M$ is spacelike (i.e., the induced metric $g$ of $M$ is Riemannian) and the ambient space is Lorentzian, and the coordinate system $(x^\a)$ is supposed to be {\em future directed}, i.e., the time function $x^0$ is increasing on future directed curves. Then in this case we always choose $\nu$ to be also future directed (timelike) normal, which is equivalent to $\-g(\partial_r,\nu)<0$.		
    \end{enumerate}
\end{rem}
	
In the sequel, the Euclidean space $\mathbb R^{n+2}$ is the $(n+2)$-dimensional vector space equipped with the bilinear form
\begin{align*}
\langle v,w\rangle=\sum_{\a=0}^{n+1}v^\a w^\a,
\end{align*}
while the Minkowski space $\mathbb R^{n+1,1}$ is the $(n+2)$-dimensional vector space equipped with the bilinear form
\begin{align*}
\langle v,w\rangle=-v^0w^0+\sum_{\a=1}^{n+1}v^\a w^\a.
\end{align*}
The Riemannian space form $\mathbb N^{n+1}(\e)$ is the Riemannian manifold with constant curvature $\e$:
\begin{align*}
\left\{\begin{aligned}
&\text{if $\e=1$}, \quad  \mathbb S^{n+1}=\{ y \in \mathbb R^{n+2} :~ \langle y,y\rangle=1\};\\
&\text{if $\e=0$}, \quad  \mathbb R^{n+1};\\
&\text{if $\e=-1$}, \quad \mathbb H^{n+1}=\{ y\in \mathbb R^{n+1,1} :~ \langle y,y\rangle=-1,y^0>0\}.
\end{aligned}\right.
\end{align*}
Similarly, let $\mathbb N^{n,1}(\e)$ be the following Lorentzian manifold of constant curvature $\e$ (cf. \cite[P. 110,228]{Oneil1983}):
\begin{align*}
\left\{\begin{aligned}
&\text{if $\e=1$}, \quad \mathbb S^{n,1}=\{y\in \mathbb R^{n+1,1} :~ \langle y,y\rangle=1\};\\
&\text{if $\e=0$}, \quad  \mathbb R^{n,1};\\
&\text{if $\e=-1$}, \quad \mathbb H^{n,1}=\{ y\in \mathbb R^{n,2} :~ \langle y,y\rangle=-1\},
\end{aligned}\right.
\end{align*}
where $\mathbb R^{n,2}$ is the $(n+2)$-dimensional vector space endowed with the bilinear form 
\begin{align*}
\langle v,w\rangle=-v^0 w^0-v^{1}w^{1}+\sum_{\a=2}^{n+1}v^\a w^\a.
\end{align*}
Here $\mathbb S^{n,1}$ and $\mathbb H^{n,1}$ are called {\em de Sitter space} and {\em anti de Sitter space}, respectively. The upper branch of de Sitter space is defined by
\begin{align*}
\mathbb S^{n,1}_{+}=\{ y\in \mathbb R^{n+1,1}:~\langle y,y\rangle=1, ~y^0>0 \}.
\end{align*}
The Rimannian space form $\mathbb N^{n+1}(\e)$ and de Sitter space $\mathbb S^{n,1}$ can be expressed as a warped product manifold with the base manifold $\mathbb S^n$. Let $N=I \times \mathbb S^n$ be a Riemannian or Lorentzian warped product with metric 
\begin{align}\label{s2:warped-metric}
\-g=\s dr^2+\l^2(r)g_{\mathbb S^n},
\end{align}
where $g_{\mathbb S^n}=\s_{ij}d\t^id\t^j$ is the round metric of $\mathbb S^n\subset \mathbb R^{n+1}$ and
\begin{enumerate}
	\item if $N=\mathbb S^{n+1}$, then $I=[0,\pi)$, $\s=1$ and $\l(r)=\sin r$;
	\item if $N=\mathbb H^{n+1}$, then $I=[0,\infty)$, $\s=1$ and $\l(r)=\sinh r$;
	\item if $N=\mathbb S^{n,1}$, then $I=(-\infty,\infty)$, $\s=-1$ and $\l(r)=\cosh r$, see e.g. \cite[Lemma 2.1]{Scheuer19}.
\end{enumerate}

If $M$ is a compact, starshaped hypersurface in $\mathbb N^{n+1}(\e)$, then it can be represented as a smooth radial graph over $\mathbb S^n$, i.e., in the coordinates
\begin{align}\label{s2:radial-graph}
M=\{(\rho(\t),\t): \t \in\mathbb S^n\}=\{(\rho(\t(\xi)),\t(\xi)):\xi\in M\},
\end{align}
where $\rho:\mathbb S^n \ra \mathbb R^{+}$ is a smooth function. 

Similarly, it follows from \cite[Proposition 1.6.3, Remark 1.6.4]{Gerh06} that if $M$ is a compact, spacelike hypersurface in $\mathbb S_{+}^{n,1}$, then $M$ is also a smooth graph over $\mathbb S^n$ as \eqref{s2:radial-graph}, since the compact Cauchy hypersurface $\mathcal{S}_0$ can be chosen as the slice $\{y^0=0\}$ which is isometric to $\mathbb S^n$. Here we restrict the de Sitter space $\mathbb S^{n,1}$ to be its upper branch $\mathbb S^{n,1}_{+}$ and hence $\rho>0$, which guarantees that a bounded domain can be enclosed by a spacelike hypersurface and $\{y^0=0\}$. Along the hypersurface $M$ in $\mathbb S^{n,1}_{+}$, the future directed unit timelike normal is given by
\begin{align}\label{s2:unit-normal-de-Sitter}
\nu=\frac{1}{v}\( 1, -\frac{1}{\cosh^2 \rho}\s^{ik}\rho_k\),
\end{align}
where $\rho_k=\partial_{\t^k}\rho$ is the covariant derivative on $\mathbb S^n$ and
\begin{align}\label{s2:express-v}
v^2=1-\cosh^{-2}\rho \s^{ij}\rho_i \rho_j=1-\cosh^{-2}\rho|D\rho|^2.
\end{align}
The induced metric and the area element of $M$ are given by
\begin{align*}
g_{ij}=-\rho_i\rho_j+\cosh^2 \rho \s_{ij},
\end{align*}
and 
\begin{align}\label{s2:area-element}
d\mu=\sqrt{\det(g_{ij})}d\t=\cosh^n \rho v \sqrt{\det(\s_{ij})} d\t.
\end{align}
Along the variational vector field $\frac{\partial}{\partial t}X$, the spacelike hypersurfaces $M_t$ can be parametrized by 
\begin{align*}
M_t=\{(\rho(t,\t),\t): \t\in \mathbb S^n \},
\end{align*}
where $\rho(t,\cdot)$ is a smooth function defined on the unit sphere. Hence the evolution of the spacelike hypersurfaces $M_t$ can be reduced to a parabolic equation for the radial function. As long as the evolving hypersurface exists and remains spacelike, the radial function $\rho$ satisfies 
\begin{align}\label{s2:radial-evolution}
\frac{\partial \rho}{\partial t}=\langle \frac{\partial}{\partial t}X,\nu\rangle v,
\end{align}
where $v$ is given by \eqref{s2:express-v}. Notice that the velocity vector of \eqref{s2:radial-evolution} is in the direction of $\partial_r$. A direct calculation yields
\begin{align*}
\frac{\partial}{\partial t}X-\frac{\partial \rho}{\partial t}\partial_r
=&(\frac{\partial}{\partial t}X)^\top+\langle \frac{\partial}{\partial t}X,\nu\rangle(\nu-v\partial_r),
\end{align*}
and $\langle \nu-v\partial_r,\nu\rangle=0$. Therefore, the difference $\frac{\partial}{\partial t}X-\frac{\partial \rho}{\partial t}\partial_r$ is a time-dependent tangential vector field, and \eqref{s2:radial-evolution} follows by composing the original flow with the reparametrization associated with this tangential vector field.

\subsection{Duality}\label{sec:2.2}
In this subsection, we recall the duality relations in the sphere and in hyperbolic/de Sitter space via the Gauss map. For more details, we refer the readers to Gerhardt's book \cite[Chapters 9\&10]{Gerh06}. 

{\bf Duality in the sphere}. 
Let $X:M_0\ra M \subset \mathbb S^{n+1}$ be an embedding of a closed, connected, strictly convex hypersurface. Then by the Hadamard theorem in the sphere \cite{DoCarmo-Warner1970}, $M$ is embedded, homeomorphic to $\mathbb S^n$, contained in an open hemisphere and it is the boundary of a convex body $\hat{M}\subset \mathbb S^{n+1}$. Viewing $M$ as a submanifold of codimension $2$ in $\mathbb R^{n+2}$, its Gaussian formula is
\begin{align*}
X_{ij}=-g_{ij} X-h_{ij}X^\ast,
\end{align*}
where $X^\ast \in T_{X} \mathbb R^{n+2}$ represents the unit outward normal $\nu\in T_X \mathbb S^{n+1}$. Then the map 
\begin{align}\label{s2:Gauss-map}
X^\ast: M_0 \ra M^\ast \subset \mathbb S^{n+1}
\end{align} is an embedding of a closed and strictly convex hypersurface $M^\ast$. We call this map the {\em Gauss map} of $M$.

The duality in the sphere is characterized by the following theorem. 
\begin{thm}\label{s2:thm-duality-sphere}\cite[Theorem 9.2.5, 9.2.9]{Gerh06}
	Let $X:M_0\ra M \subset \mathbb S^{n+1}$ be a closed, connected, strictly convex hypersurface. Then the Gauss map $X^\ast$ given by \eqref{s2:Gauss-map} is the embedding of a closed, connected, strictly convex hypersurface $M^\ast\subset \mathbb S^{n+1}$. Viewing $M^\ast$ as a submanifold of codimension $2$ in $\mathbb R^{n+2}$, its Gaussian formula is
	\begin{align}\label{s2:Gauss-formula-polar}
	X^\ast_{ij}=-g^{\ast}_{ij} X^\ast-h^{\ast}_{ij}X,
	\end{align}  
	where $g^\ast_{ij}$ and $h^\ast_{ij}$ are the metric and the second fundamental form of the hypersurface $M^\ast \subset \mathbb S^{n+1}$ and $X=X(\xi)$ is the embedding of $M$ which also represents the unit outward normal of $M$. The second fundamental form $h^\ast_{ij}$ is defined with respect to the unit inner normal vector.

	The second fundamental forms of $M$, $M^\ast$ and the corresponding principal curvatures $\k_i,\k_i^\ast$ satisfy 
	\begin{align}\label{s2:duality-sphere}
	h_{ij}=h^\ast_{ij}=\langle X^\ast_i,X_j\rangle, \quad \k_i^\ast=\k_i^{-1},
	\end{align}
	where $\langle \cdot,\cdot\rangle$ is the inner product in $\mathbb R^{n+2}$.
\end{thm}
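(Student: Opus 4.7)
The plan is to derive all stated identities by a local computation in $\mathbb R^{n+2}$, viewing $M\subset \mathbb S^{n+1}\subset \mathbb R^{n+2}$ as a codimension two submanifold, and then to settle global injectivity of $X^\ast$ separately via convexity. The main algebraic inputs are the identities $\langle X,X\rangle=\langle X^\ast,X^\ast\rangle=1$ and $\langle X,X^\ast\rangle=0$, the Gauss formula \eqref{s2:Gauss-formula} combined with the Gauss formula of $\mathbb S^{n+1}$ inside $\mathbb R^{n+2}$ (yielding the codimension two version $X_{;ij}=-g_{ij}X-h_{ij}X^\ast$), the Weingarten equation \eqref{s2:Weingarten-eq}, and the Codazzi equation \eqref{s2:Codazzi-formula}, which in $\mathbb S^{n+1}$ reduces to $h_{ij;k}=h_{ik;j}$ because of constant sectional curvature.

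First I would show that $X^\ast$ is an immersion and simultaneously establish $h_{ij}=\langle X_i^\ast,X_j\rangle$. Differentiating $\langle X^\ast,X^\ast\rangle=1$ and $\langle X,X^\ast\rangle=0$ yields $\langle X^\ast,X_i^\ast\rangle=0$ and $\langle X,X_i^\ast\rangle=-\langle X_i,X^\ast\rangle=0$, so $X_i^\ast$ lies in $\operatorname{span}\{X_k\}$. Applying \eqref{s2:Weingarten-eq} with $\nu=X^\ast$ identifies $X_i^\ast=h_i^k X_k$, and taking inner product with $X_j$ gives $\langle X_i^\ast,X_j\rangle=h_i^k g_{kj}=h_{ij}$. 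Since strict convexity of $M$ makes the Weingarten matrix $h_i^k$ invertible, $X^\ast$ has rank $n$ everywhere and is an immersion.

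Next I would derive the Gauss formula \eqref{s2:Gauss-formula-polar} for $M^\ast$ by differentiating $X_i^\ast=h_i^k X_k$ once more on $M$ and substituting $X_{;kj}=-g_{kj}X-h_{kj}X^\ast$, obtaining
\begin{align*}
X^\ast_{;ij}=h^k_{i;j}X_k-h_{ij}X-h_i^k h_{jk}X^\ast,
\end{align*}
which is symmetric in $i,j$ by Codazzi. Invertibility of $h$ gives $T_{X^\ast}M^\ast=\operatorname{span}\{X_k\}$, so $h^k_{i;j}X_k$ is precisely the tangential part of $X_{;ij}^\ast$ relative to $M^\ast$, while $\{X,X^\ast\}$ is an orthonormal basis for the two-dimensional normal space of $M^\ast$ in $\mathbb R^{n+2}$ (using the computed relations $\langle X,X_i^\ast\rangle=0$, $\langle X^\ast,X_i^\ast\rangle=0$, $\langle X,X^\ast\rangle=0$, $|X|=|X^\ast|=1$). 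Reading off the two normal coefficients yields \eqref{s2:Gauss-formula-polar} with $g^\ast_{ij}=h_i^k h_{kj}$ and $h^\ast_{ij}=h_{ij}$. Diagonalizing in an orthonormal frame at a point where $h_{ij}=\kappa_i\delta_{ij}$ gives $g^\ast_{ij}=\kappa_i^2\delta_{ij}$ and $h^\ast_{ij}=\kappa_i\delta_{ij}$, so the Weingarten operator $(g^\ast)^{ik}h^\ast_{kj}$ has eigenvalues $\kappa_i^\ast=\kappa_i^{-1}$.

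The remaining obstacle is global injectivity of $X^\ast$. Connectedness of $M^\ast$ is inherited from $M_0$, closedness from compactness of $M_0$ and continuity of $X^\ast$, and the principal curvatures $\kappa_i^\ast=\kappa_i^{-1}>0$ yield local strict convexity. For injectivity, if $X^\ast(p)=X^\ast(q)$ with $p\neq q$ then two distinct points of $M$ admit the same outward $\mathbb S^{n+1}$-normal; but the Hadamard-type theorem of do Carmo--Warner already cited in the excerpt asserts that a closed strictly convex hypersurface in $\mathbb S^{n+1}$ lies in an open hemisphere and bounds a convex body $\hat{M}$, and distinct boundary points of such a convex body admit distinct outward supporting great hyperspheres, hence distinct outward unit normals. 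Combining injectivity with the immersion property and compactness of $M_0$, $X^\ast$ is an embedding onto a closed, connected, strictly convex hypersurface $M^\ast\subset \mathbb S^{n+1}$, completing the proof.
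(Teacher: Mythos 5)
The paper states Theorem \ref{s2:thm-duality-sphere} as a citation of Gerhardt \cite{Gerh06} and does not reproduce a proof, so there is no in-paper argument to compare against line by line; what follows is an assessment of your argument on its own merits. Your proof is correct, and the route you take --- working entirely in $\mathbb{R}^{n+2}$ with the codimension-two Gauss formula, deriving $X^\ast_i = h_i^k X_k$ from the orthogonality relations plus Weingarten, differentiating once more to read off the normal components $-h_{ij}X - h_i^kh_{kj}X^\ast$, and then handling global injectivity via the do Carmo--Warner convex-body picture --- is precisely the standard treatment in Gerhardt's book. Two small points worth noting for completeness: (1) you compute $X^\ast_{;ij}$ with the connection of $g$ rather than $g^\ast$, and these differ by a term tangent to $M^\ast$; since the normal projection is unaffected by the change of connection, your extraction of $g^\ast_{ij}=h_i^kh_{kj}$ and $h^\ast_{ij}=h_{ij}$ is legitimate, but this step is implicit in your write-up and deserves a sentence. (2) In the injectivity argument, the fact that two distinct boundary points of $\hat M$ cannot share an outward normal should be justified by observing that equal normals would force both points onto the same supporting great hypersphere, whose geodesic segment joining them would then lie in $\partial\hat M$, contradicting $\kappa_i>0$; as written, the assertion ``distinct boundary points admit distinct supporting great hyperspheres'' is stated in the wrong direction and could be read as begging the question. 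Neither issue affects correctness.
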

The hypersurface $M^\ast$ is also called the {\em polar hypersurface} of $M$, which can be equivalently defined by
$$
M^\ast=\{y\in \mathbb S^{n+1} : \sup_{x\in M}\langle x,y\rangle=0\}.
$$ 
Let $\hat{M}$ be the (closed) convex body of $M\subset \mathbb S^{n+1}$. Then the {\em dual body} of $\hat{M}$ is defined by
\begin{align}\label{s2:dual-body-sphere}
\hat{M}^\ast=\{y\in \mathbb S^{n+1} : \langle x,y\rangle\leq 0, ~\forall ~x\in \hat{M}\}.
\end{align}
By the Hadamard theorem, there exists a point $z\in \hat{M}$ such that $\hat{M}\subset \mathcal H(z)$, where $\mathcal H(z)$ is the open hemisphere centered at $z\in \mathbb S^{n+1}$. Then it follows from \cite[Corollary 9.2.10]{Gerh06} that the dual body $\hat{M}^\ast$ of $\hat{M}$ is a convex body in $\mathcal H(-z)$, where $-z\in \mathbb S^{n+1}$ is the antipodel point of $z$. Moreover, the dual body $\hat{M}^\ast$ is the convex body of $M^\ast$. Furthermore, we have $(M^\ast)^\ast=M$ and $(\hat{M}^\ast)^\ast=\hat{M}$.

{\bf Duality between hyperbolic/de Sitter space}.
Similar as the duality in the sphere, let $X:M_0 \ra M \subset \mathbb H^{n+1}$ be an embedding of a closed, connected, strictly convex hypersurface. Then the representation $X^\ast\in T_X \mathbb R^{n+1,1}$ of the unit outward normal $\nu\in T_X \mathbb H^{n+1}$ is also an embedding
\begin{align}\label{s2:Gauss-map-hyperbolic}
X^\ast: M_0 \ra M^\ast \subset \mathbb S^{n,1}
\end{align}
of a strictly convex, closed and spacelike hypersurface $M^\ast$. We also call $X^\ast$ the {\em Gauss map} of $M$. The duality in hyperbolic/de Sitter space is characterized by the following two theorems. 
\begin{thm}\label{s2:thm-duality-hyperbolic-space}\cite[Theorem 10.4.4]{Gerh06}
	Let $X:M_0\ra M \subset \mathbb H^{n+1}$ be a closed, connected, strictly convex hypersurface. Then the Gauss map $X^\ast$ given by \eqref{s2:Gauss-map-hyperbolic} is an embedding of a closed, spacelike, strictly convex hypersurface $M^\ast\subset \mathbb S^{n,1}$. Viewing $M^\ast$ as a submanifold of codimension $2$ in $\mathbb R^{n+1,1}$, its Gaussian formula is 
	\begin{align}\label{s2:Gauss-formula-polar-hyperbolic}
	X^\ast_{ij}=-g^{\ast}_{ij} X^\ast-h^{\ast}_{ij}X,
	\end{align}
	where $g^\ast_{ij}$ and $h^\ast_{ij}$ are the metric and the second fundamental form of the hypersurface $M^\ast \subset \mathbb S^{n,1}$ and $X=X(\xi)$ is the embedding of $M$ which also represents the future directed normal vector of $M^\ast$. The second fundamental form $h^\ast_{ij}$ is defined with respect to the future directed unit normal, where the time orientation is inherited from $\mathbb R^{n+1,1}$. 
	
	The second fundamental forms of $M$, $M^\ast$ and the corresponding principal curvatures $\k_i,\k_i^\ast$ satisfy 
	\begin{align}\label{s2:duality-hyperbolic}
	h_{ij}=h^\ast_{ij}=\langle X^\ast_i,X_j\rangle, \quad \k_i^\ast=\k_i^{-1}.
	\end{align}
	where $\langle \cdot,\cdot\rangle$ is the inner product in $\mathbb R^{n+1,1}$. 
\end{thm}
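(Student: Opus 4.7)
The plan is to mimic the sphere-case argument of Theorem \ref{s2:thm-duality-sphere}, working in the ambient Minkowski space $\R^{n+1,1}$ in which both $\H^{n+1}$ and $\mathbb S^{n,1}$ sit as codimension-one hyperquadrics whose unit normal at any point $y$ is the position vector $y$ itself. Duality will simply exchange the two positions: the map $X^\ast = \nu$ sends the embedding $X\colon M_0 \to \H^{n+1}$ to an embedding into $\mathbb S^{n,1}$, and then the original position vector $X$ plays the role of the future-directed timelike unit normal of $M^\ast$ in $\mathbb S^{n,1}$.

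First I will verify that $X^\ast$ takes values in $\mathbb S^{n,1}$: since $\nu$ is the outward unit normal of $M$ in $\H^{n+1}$, one has $\langle X^\ast, X^\ast\rangle = 1$, and $\langle X^\ast, X\rangle = 0$ because $X^\ast \in T_X \H^{n+1} = X^\perp$. Next I compute $X^\ast_i$ in the flat ambient connection. As $X^\ast$ is tangent to $\H^{n+1}$, one has $\langle \bar D_i X^\ast, X\rangle = -\langle X^\ast, X_i\rangle = 0$, so $\bar D_i X^\ast$ has no $X$-component and therefore coincides with the hyperbolic covariant derivative $\nabla^{\H}_i \nu$; the Weingarten equation \eqref{s2:Weingarten-eq} then yields $X^\ast_i = h_i^k X_k$. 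Strict convexity makes $(h_i^k)$ invertible, so $X^\ast$ is an immersion, and the induced metric
\begin{align*}
g^\ast_{ij} = \langle X^\ast_i, X^\ast_j\rangle = h_i^k h_{kj}
\end{align*}
is positive definite, which shows $M^\ast$ is spacelike. Global embeddedness then follows from a Hadamard-type argument for the Gauss map of a closed strictly convex hypersurface in a simply connected space form.

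To derive the Gauss formula \eqref{s2:Gauss-formula-polar-hyperbolic} I first establish the codimension-two Gauss formula for $M$ itself: projecting $\bar D_i X_j$ onto the normal frame $\{X, X^\ast\}$ and using $\langle \bar D_i X_j, X\rangle = -g_{ij}$, $\langle \bar D_i X_j, X^\ast\rangle = -h_{ij}$ together with $\langle X, X\rangle = -1$ and $\langle X^\ast, X^\ast\rangle = 1$ gives $X_{ij} = g_{ij} X - h_{ij} X^\ast$. I then differentiate $X^\ast_j = h_j^k X_k$ once more in $\R^{n+1,1}$, substitute this expression, and use the Codazzi symmetry $\nabla_i h_j^k = \nabla_j h_i^k$ --- which holds in the space form $\H^{n+1}$ because the ambient curvature contribution to \eqref{s2:Codazzi-formula} vanishes --- to check that the tangential terms reassemble exactly into the Christoffel correction $(\G^\ast)_{ij}^l X^\ast_l$ for the metric $g^\ast = h\cdot h$. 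After this cancellation the surviving purely normal part gives \eqref{s2:Gauss-formula-polar-hyperbolic} and simultaneously forces $h^\ast_{ij} = \langle X^\ast_i, X_j\rangle = h_i^k g_{kj} = h_{ij}$. Finally, $\k_i^\ast = \k_i^{-1}$ is immediate from the eigenvalue equation $\det(h^\ast - \k^\ast g^\ast) = 0$: with $g^\ast = h\cdot h$ and $h^\ast = h$, the principal curvatures of $M^\ast$ are the eigenvalues of $(g^\ast)^{-1} h^\ast = (h\cdot h)^{-1} h = h^{-1}$.

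The main obstacle will be bookkeeping the causal orientations: one must verify that $X$ is genuinely the future-directed unit normal of $M^\ast$ in $\mathbb S^{n,1}$ (rather than its past-directed counterpart $-X$) and that the paper's Gauss-formula sign convention with $\sigma = \langle X, X\rangle = -1$ is consistent with the claim $h^\ast_{ij} = h_{ij}$ of \eqref{s2:duality-hyperbolic}. A secondary subtlety lies in the tangential cancellation at the codimension-two step: the required identity $(\G^\ast - \G)_{ij}^l h_l^k = \nabla_i h_j^k$ relating the Christoffels of $g$ and of $g^\ast = h\cdot h$ on $M_0$ must be derived via a Koszul computation, and it essentially uses the Codazzi symmetry. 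In a non-space-form ambient this symmetry would fail and extra ambient-curvature terms would obstruct the clean polar identification of $g^\ast$ and $h^\ast$.
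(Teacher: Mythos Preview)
The paper does not supply its own proof of this theorem: it is quoted verbatim from Gerhardt's book \cite[Theorem~10.4.4]{Gerh06} and used as background. Your outline is the standard argument and is essentially what appears in Gerhardt, so there is nothing to compare.

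Two small comments on your plan. First, the ``tangential cancellation'' you flag as a secondary subtlety is automatic: once $X^\ast$ is known to be an isometric immersion with induced metric $g^\ast$, the $g^\ast$--Hessian $X^\ast_{ij}$ is \emph{by definition} normal to $M^\ast$, so the $X_k$--component must vanish without any Koszul computation. In practice it is quicker to bypass the second differentiation entirely and read off $h^\ast$ from Weingarten: since $\nu^\ast=X$ and $\bar D_i X=X_i$ already lies in $T_{X^\ast}M^\ast=\mathrm{span}\{X_k\}$, the relation $X_i=(h^\ast)_i^{\,k}X^\ast_k=(h^\ast)_i^{\,k}h_k^{\,l}X_l$ forces $(h^\ast)_i^{\,k}=(h^{-1})_i^{\,k}$, whence $\kappa_i^\ast=\kappa_i^{-1}$; lowering with $g^\ast=h\cdot h$ then gives $h^\ast_{ij}=h_{ij}$ directly. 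Second, your worry about the sign in \eqref{s2:Gauss-formula-polar-hyperbolic} is well placed: with the paper's convention \eqref{s2:Gauss-formula} (namely $X^\ast_{ij}=-\sigma^\ast h^\ast_{ij}\nu^\ast$ with $\sigma^\ast=\langle X,X\rangle=-1$) the codimension--two formula one actually obtains is $X^\ast_{ij}=-g^\ast_{ij}X^\ast+h^\ast_{ij}X$, which is the form stated in \eqref{s2:Gauss-formula-polar-deSitter}. This sign discrepancy in the displayed formula does not affect anything downstream, since only \eqref{s2:duality-hyperbolic} is used later.
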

The hypersurface $M^\ast$ is also called the {\em polar hypersurface} of $M$, which can be also equivalently defined by
$$
M^\ast=\{y\in \mathbb S^{n,1} : \sup_{x\in M}\langle x,y\rangle=0\}.
$$
Let $\hat{M}$ be the (closed) convex body of $M\subset \mathbb H^{n+1}$, which is characterized by (see \cite[Lemma 10.4.1]{Gerh06})
\begin{align}\label{s2:convex-body-hyperbolic}
\hat{M}=\{y\in \mathbb H^{n+1}: \langle y,X^\ast \rangle\leq 0,~\forall X\in M \}.
\end{align}
In the hyperboloid model of the hyperbolic space, the point $e_0=(1,0,\cdots,0)\in \mathbb R^{n+1,1}$ is called the {\em Beltrami point}. The Hadamard theorem in hyperbolic space (\cite[Theorem 10.3.1]{Gerh06}) states that any strictly convex hypersurface $M\subset \mathbb H^{n+1}$ bounds a strictly convex body $\hat{M}$ of the hyperbolic space. Due to the homogeneity of the hyperbolic space, any interior point $z$ in $\hat{M}$ may act as a Beltrami point by using a Lorenz boost $\mathbb O_z\in O(n+1,1)$ to bring $z$ to $e_0$. By \cite[Theorem 10.4.9]{Gerh06}, the Gauss maps provide a bijection between the closed, strictly convex and connected hypersurfaces $M\subset \mathbb H^{n+1}$ having the Beltrami point in the interior of their convex bodies and the spacelike, closed, connected and strictly convex hypersurfaces $M^\ast\subset \mathbb S^{n,1}_{+}$, where $\mathbb S^{n,1}_{+}$ is the upper branch of the de Sitter space. Therefore, the polar hypersurface $M^\ast$ of $M$ is contained in $\mathbb S^{n,1}_{+}$. Moreover, the dual body of $\hat{M}$ can be defined by
\begin{align}\label{s2:dual-body-deSitter}
\hat{M}^\ast=\{y\in \mathbb S^{n,1} : \langle x,y\rangle\leq 0, ~\forall x\in \hat{M}\}\subset \mathbb S^{n,1}_{+},
\end{align}
which is the bounded domain enclosed by the slice $\{y^0=0\}$ and the hypersurface $M^\ast$.  

The reverse direction starting from a closed, spacelike, strictly convex hypersurface in $\mathbb S^{n,1}_{+}$ will be stated as follows:
\begin{thm}\label{s2:thm-duality-de-Sitter-space}\cite[Theorem 10.4.5]{Gerh06}
	Let $X^\ast :M_0\ra M^\ast \subset \mathbb S_{+}^{n,1}$ be a closed, connected, spacelike and strictly convex hypersurface. Viewing $M^\ast$ as a submanifold of codimension $2$ in $\mathbb R^{n+1,1}$, its Gaussian formula is 
	\begin{align}\label{s2:Gauss-formula-polar-deSitter}
	X^\ast_{ij}=-g^{\ast}_{ij} X^\ast+h^{\ast}_{ij}X,
	\end{align}
	where $X^\ast=X^\ast(\xi)$ is the embedding, $X$ the future directed unit normal, and $g^\ast_{ij}$ and $h^\ast_{ij}$ the induced metric and the second fundamental form of the hypersurface $M^\ast \subset \mathbb S^{n,1}$. We define the Gauss map as $X=X(\xi)$ 
	\begin{align*}
	X: M_0 \ra M \subset \mathbb H^{n+1}.
	\end{align*} 
	Then the Gauss map is the embedding of a closed, connected, strictly convex hypersurface $M$ in $\mathbb H^{n+1}$. Viewing $M$ as a submanifold of codimension $2$ in $\mathbb R^{n+1,1}$, then its Gaussian formula is 
	\begin{align*}
	X_{ij} =g_{ij} X-h_{ij}X^\ast.
	\end{align*}
	where $g_{ij}$, $h_{ij}$ be the induced metric and the second fundamental form of $M$. The second fundamental forms of $M$, $M^\ast$ and the corresponding principal curvatures $\k_i,\k_i^\ast$ satisfy 
	\begin{align}\label{s2:duality-deSitter}
	h_{ij} =h^\ast_{ij}=\langle X_i,X^\ast_j\rangle, \quad 	\k^\ast_i=\k_i^{-1},
	\end{align}	
\end{thm}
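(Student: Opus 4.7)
The plan is to construct the Gauss map $X$ explicitly as a future-directed unit timelike normal field, verify by direct inspection that it lands in $\mathbb H^{n+1}$, derive the Gauss-type formulas by differentiating a small set of orthogonality relations, and finally promote $X$ to a global embedding using the Hadamard theorem in hyperbolic space.

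\textbf{Construction and Gauss formula for $M^\ast$.} At each point of $M^\ast\subset\mathbb S^{n,1}_{+}\subset\mathbb R^{n+1,1}$, the normal space of $M^\ast$ as a codimension-two submanifold of $\mathbb R^{n+1,1}$ is a Lorentzian two-plane containing the spacelike unit vector $X^\ast$. I define $X$ to be the unique future-directed unit timelike vector in this plane, characterized by $\langle X,X\rangle=-1$, $\langle X,X^\ast\rangle=0$, $\langle X,X^\ast_i\rangle=0$ for all $i$, and $X^0>0$. Existence, smoothness, and future-orientability follow from $M^\ast$ being spacelike, and from $\langle X,X\rangle=-1$ together with $X^0>0$ it is immediate that $X(\xi)\in\mathbb H^{n+1}$. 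The second covariant derivative $X^\ast_{ij}$ (with respect to $g^\ast$) is normal to $M^\ast$ in $\mathbb R^{n+1,1}$, hence expands as $\alpha_{ij}X^\ast+\beta_{ij}X$. Differentiating $\langle X^\ast,X^\ast\rangle=1$ twice and projecting gives $\alpha_{ij}=\langle X^\ast_{ij},X^\ast\rangle=-g^\ast_{ij}$. The ambient Weingarten relation in $\mathbb S^{n,1}$ (applied to the future-directed normal $X$) together with the orthogonality conditions yields $X_i=h^{\ast,k}_iX^\ast_k$ in $\mathbb R^{n+1,1}$, and differentiating $\langle X,X^\ast_j\rangle=0$ produces $\langle X^\ast_{ij},X\rangle=-h^\ast_{ij}$; combining these with $\langle X,X\rangle=-1$ gives $\beta_{ij}=h^\ast_{ij}$, establishing \eqref{s2:Gauss-formula-polar-deSitter}.

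\textbf{Gauss formula for $M$ and the duality relations.} The identity $X_i=h^{\ast,k}_iX^\ast_k$ with $h^{\ast,k}_i:=(g^\ast)^{kl}h^\ast_{il}$ is the Weingarten equation for $M^\ast$ in $\mathbb S^{n,1}$. The strict convexity of $M^\ast$, i.e.\ positive definiteness of $h^\ast_{ij}$, immediately makes $X$ an immersion. Setting
\begin{align*}
g_{ij}:=\langle X_i,X_j\rangle=h^{\ast,k}_ih^\ast_{jk},\qquad h_{ij}:=\langle X_i,X^\ast_j\rangle=h^\ast_{ij},
\end{align*}
and decomposing the covariant second derivative of $X$ (now with respect to $g$) along the Lorentzian two-plane spanned by $\{X,X^\ast\}$, the same calculus---using $\langle X,X\rangle=-1$ and $\langle X,X^\ast\rangle=0$---gives $\langle X_{ij},X\rangle=-g_{ij}$ and $\langle X_{ij},X^\ast\rangle=-h_{ij}$, whence $X_{ij}=g_{ij}X-h_{ij}X^\ast$ as claimed. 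Reading off the two Weingarten equations $X_i=h^{\ast,k}_iX^\ast_k$ and $X^\ast_i=h^k_iX_k$ then forces the two Weingarten operators to be mutual inverses, giving $\kappa^\ast_i=\kappa_i^{-1}>0$.

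\textbf{Main obstacle: global embeddedness and strict convexity of $M$.} The subtlest step is upgrading the immersion $X:M_0\to\mathbb H^{n+1}$ to an embedding of a closed, connected, strictly convex hypersurface of $\mathbb H^{n+1}$. Positivity $\kappa_i=(\kappa^\ast_i)^{-1}>0$ shows the image has positive definite second fundamental form, while compactness and connectedness are inherited from $M_0$. Global embeddedness, together with the fact that the image bounds a strictly convex body containing a Beltrami point in its interior, then follow from the Hadamard theorem in hyperbolic space \cite[Theorem 10.3.1]{Gerh06}. Once this is secured, applying Theorem \ref{s2:thm-duality-hyperbolic-space} to $M$ reproduces $X^\ast$ as its Gauss map, closing the duality loop $(M^\ast)^\ast=M$ between $\mathbb H^{n+1}$ and $\mathbb S^{n,1}_{+}$.
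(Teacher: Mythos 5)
The paper does not give its own proof of this theorem; it is imported verbatim from \cite[Theorem 10.4.5]{Gerh06}, so there is no in-paper argument to compare against step by step. Your reconstruction is correct and follows the standard route (the mirror image of what is done in Theorem~\ref{s2:thm-duality-hyperbolic-space}): the normal plane of $M^\ast$ in $\mathbb R^{n+1,1}$ is a Lorentzian two-plane containing the spacelike position vector $X^\ast$, so there is a unique future-directed unit timelike $X$ in it, which lands in $\mathbb H^{n+1}$; differentiating $\langle X^\ast,X^\ast\rangle=1$, $\langle X,X^\ast\rangle=0$, and $\langle X,X\rangle=-1$, together with the Weingarten relation $X_i=(h^\ast)^k_i X^\ast_k$, gives both codimension-two Gauss formulas and $h_{ij}=h^\ast_{ij}=\langle X_i,X^\ast_j\rangle$; strict convexity of $M^\ast$ makes $X$ an immersion and makes the two Weingarten operators mutual inverses, yielding $\kappa^\ast_i=\kappa_i^{-1}$; global embeddedness then comes from the Hadamard theorem in $\mathbb H^{n+1}$ (\cite[Theorem 10.3.1]{Gerh06}). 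One small remark: when you switch to the covariant Hessian ``now with respect to $g$'' for $X_{ij}$, it is worth noting explicitly that the tangential Christoffel correction does not affect the normal components you extract, so the computation is insensitive to which of the two (identical) tangent planes' connections one uses; and the parenthetical remark about the Beltrami point belongs to Gerhardt's Theorem 10.4.9 rather than 10.4.5 and is not needed for the statement at hand. Otherwise the signs and the logic check out.
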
 
Combining Theorems \ref{s2:thm-duality-hyperbolic-space} with \ref{s2:thm-duality-de-Sitter-space}, we conclude that $(M^\ast)^\ast=M$ and $(\hat{M}^\ast)^\ast=\hat{M}$.

\begin{rem}
	For a convex body $K$ in Euclidean space, its dual body $K_z^\ast$ depends on the choice of the interior point $z$ in $K$. For a strictly convex body $K$ in $\mathbb S^{n+1}$ or $\mathbb H^{n+1}$, or a spacelike and strictly convex domain $K$ in $\mathbb S^{n,1}_{+}$, its dual body $K^\ast$ in $\mathbb S^{n+1}$, $\mathbb S^{n,1}_{+}$ or $\mathbb H^{n+1}$ of $K$ is defined by \eqref{s2:dual-body-sphere}, \eqref{s2:dual-body-deSitter} and \eqref{s2:convex-body-hyperbolic}, respectively. This definition is independent of the choice of the interior point in $K$.
\end{rem} 

\subsection{Dual flows}
The dual flows for (pure) curvature flows in the sphere was first introduced by Gerhardt \cite{Ge15}, and later it was also used by Yu \cite{Yu2016} for curvature flows in hyperbolic space. For general curvature flows, the deduction of the dual flows can be found in \cite[(4.3),(4.4)]{Bryan-Ivaki-Scheuer2020}. 

Let $X_0: M_0^n\ra (N^{n+1},\-g)$ be a smooth embedding such that $M=X_0(M_0)$ is a closed, strictly convex hypersurface, where $N$ is one of $\mathbb S^{n+1}$, $\mathbb H^{n+1}$ and $\mathbb S^{n,1}_{+}$. We consider the smooth family of strictly convex embeddings $X:M_0 \times [0,T)\ra N$ satisfying
\begin{align}\label{s2:curvature-flow}
\left\{
\begin{aligned}
\frac{\partial}{\partial t}X=&-\s \mathcal{F}\nu, \\
X(\cdot,0)=&X_0(\cdot).
\end{aligned}\right.
\end{align}
where $\nu$ is the unit normal of $M_t=X(M_0,t)$ and $\mathcal{F}$ is a smooth and strictly monotone function of the principal curvatures of $M_t$. Here the signature $\s=\-g(\nu,\nu)=\langle \nu,\nu \rangle$, where $\langle \cdot,\cdot \rangle$ represents the Euclidean or the Minkowski inner product for the flows in $\mathbb S^{n+1}$ or $\mathbb H^{n+1}$, $\mathbb S^{n,1}$, respectively. In all three cases, the pair $X$, $X^\ast=\nu$ satisfies $\langle X,X^\ast \rangle=0$. It follows that
\begin{align*}
\langle\frac{\partial}{\partial t}X^\ast, X \rangle=-\langle \frac{\partial}{\partial t}X, X^\ast \rangle=-\langle -\s \mathcal{F}X^\ast, X^\ast \rangle =\mathcal{F}.
\end{align*}
Due to $\langle X^\ast,X^\ast_{i} \rangle=0$, the Weingarten equation $X^\ast_i=h_i^k X_k$ (see also \cite[Lemma 9.2.4, Lemma 10.4.3]{Gerh06}) imply that
\begin{align*}
0=\langle \frac{\partial}{\partial t}X^\ast, X^\ast_i \rangle=\langle \frac{\partial}{\partial t}X^\ast, h_i^k X_k\rangle
 =-h_i^k \langle X^\ast, (-\s \mathcal{F} X^\ast)_k\rangle 
 =\s h_i^k \langle X^\ast,  \mathcal{F}_k X^\ast \rangle
 =h_i^k \mathcal{F}_k,
\end{align*}
where $\mathcal{F}_k=\frac{\partial \mathcal{F}}{\partial \xi^k}$. Note that $X=(X^\ast)^\ast=\nu^\ast$ and $X^\ast_{i}=h_i^k X_k$ span $T_{X^\ast}\mathbb S^{n+1}$, $T_{X^\ast}\mathbb H^{n+1}$ or $T_{X^\ast}\mathbb S^{n,1}$ respectively, we obtain
\begin{align*}
\frac{\partial}{\partial t}X^\ast=\langle X,X\rangle \mathcal{F}X+ h_i^k \mathcal{F}_{k} (g^\ast)^{il}X^\ast_l=\s^\ast \mathcal{F} \nu^\ast+ h_i^k (g^\ast)^{il} \mathcal{F}_k X^\ast_{l}=\s^\ast \mathcal{F} \nu^\ast+ (b^\ast)^{kl} \mathcal{F}_k X^\ast_{l},
\end{align*}
where $\s^{\ast}=\langle X,X\rangle$, $((b^{\ast})_i^j)$ is the inverse matrix of the Weingarten matrix $\mathcal{W}^\ast=((h^{\ast})_i^j)$ of $M^\ast$ and $h_i^k=(b^{\ast})_i^k$ due to \eqref{s2:duality-sphere}, \eqref{s2:duality-hyperbolic} and \eqref{s2:duality-deSitter}. Let us define  
\begin{align}\label{s2:dual-function}
\mathcal{F}^\ast(\mathcal{W}^\ast):=-\mathcal{F}(\mathcal{W}).
\end{align}
Let $N^\ast$ be dual ambient space of $N$, that is, $\mathbb S^{n+1}$, $\mathbb S^{n,1}$ and $\mathbb H^{n+1}$, respectively. Then the flow of the polar hypersurfaces is a smooth family of strictly convex embeddings $X^\ast:M_0\times [0,T)\ra N^\ast$ satisfying
\begin{align}\label{s2:dual-flow}
\left\{
\begin{aligned}
\frac{\partial}{\partial t}X^\ast=&-\s^{\ast} \mathcal{F}^\ast \nu^\ast-(b^\ast)^{kl}\mathcal{F}^\ast_{k}X^\ast_l, \\
X^\ast(\cdot,0)=&X^\ast_0(\cdot).
\end{aligned}\right.
\end{align}
where $\mathcal{F}^{\ast}$ is evaluated at the Weingarten matrix $\mathcal{W}^\ast$ of $M_t^\ast=X^\ast(M_0,t)$, and $X^\ast_0:M_0 \ra N^\ast$ is the smooth embedding of $M^\ast=X_0^{\ast}(M_0)$ which is determined by the Gauss map of the hypersurface $M$. Therefore, a flow of the form \eqref{s2:curvature-flow} in the ambient spaces $\mathbb S^{n+1}$, $\mathbb H^{n+1}$, $\mathbb S^{n,1}$ has a dual flow of the form \eqref{s2:dual-flow} in the ambient spaces $\mathbb S^{n+1}$, $\mathbb S^{n,1}$, $\mathbb H^{n+1}$, respectively.

\section{Quermassintegrals}\label{sec:3}
Let $K$ be a bounded domain with smooth boundary $\partial K$ in Riemannian space forms $\mathbb N^{n+1}(\e)$ with constant curvature $\e\in \{-1,0,1\}$. Then the quermassintegrals $W_k(K)$ can be expressed by a linear combination of the curvature integrals and of the enclosed volume as follows:
\begin{equation}\label{s1:quermassintegral-def}
\begin{split}
&W_0(K)=\operatorname{Vol}(K), \quad  W_1(K)=\frac{1}{n+1}\operatorname{Area}(\partial K), \\
&W_{k+1}(K)=\frac{1}{n+1}\int_{\partial K}E_{k}d\mu+\e\frac{k}{n+2-k}W_{k-1}(K), \quad k=1,\cdots,n,
\end{split}
\end{equation}
where $E_k$ is the normalized $k$th mean curvature and $d\mu$ is the area element of $\partial K$. 

When the ambient space is the upper branch of the de Sitter space, then the volume of the region $K$ enclosed by the slice $\{y^0=0\}$ and a spacelike graph $\partial K=\{(\rho(\t),\t):\t \in \mathbb S^n\}$ can be expressed by (see \cite[Section 4]{Makowski2013})
\begin{align*}
\Vol(K)=\int_{\mathbb S^n} \int_0^{\rho(\t)} \sqrt{|\det(\-g_{ij}(r,\t))|} dr d\t=\int_{\mathbb S^n}\int_0^{\rho(\t)}\cosh^n r \sqrt{\det(\s_{ij})}dr d\t,
\end{align*}
where $\-g=-dr^2+\cosh^2r g_{\mathbb S^n}$. As the hypersurface $\partial K$ is spacelike, the surface area of $\partial K$ is defined as usual by its induced Riemannian metric. For simplicity, we also call $K$ to be a {\em spacelike domain} in $\mathbb S^{n,1}_{+}$ if $\partial K$ is a spacelike hypersurface. It is natural to define the quermassintegrals for a smooth bounded domain $K$ with spacelike boundary $\partial K$ in the upper branch of the de Sitter space. In this direction, Scheuer \cite{Scheuer19} defined the second quermassintegral as 
\begin{align*}
W_2(K)=\frac{1}{n+1}\int_{\partial K}E_{1}d\mu-\frac{1}{n+1}\Vol(K).
\end{align*}
Inspired by the expression \eqref{s1:quermassintegral-def} of quermassintegrals in the Riemannian space forms, the quermassintegrals for a spacelike domain $K$ in $\mathbb S_+^{n,1}$ can be defined inductively by
\begin{equation}\label{s1:quermassintegral-def-de-Sitter}
\begin{split}
&W_0(K)=\operatorname{Vol}(K), \quad  W_1(K)=\frac{1}{n+1}\operatorname{Area}(\partial K), \\
&W_{k+1}(K)=\frac{1}{n+1}\int_{\partial K}E_{k}d\mu-\frac{k}{n+2-k}W_{k-1}(K), \quad k=1,\cdots,n.
\end{split}
\end{equation}
In Riemannian space forms $\mathbb N^{n+1}(\e)$, the definition \eqref{s1:quermassintegral-def} coincides with the measure of totally geodesic $k$-dimensional subspaces in $\mathbb N^{n+1}(\e)$ which intersect $K$, which is the definition of quermassintegrals from integral geometry, see \cite{Sant2004} or \cite[Proposition 7]{Solanes06}. It would be an interesting question whether one can establish the integral geometric theory for smooth spacelike domains in $\mathbb S_+^{n,1}$. For example, the Crofton formulas and Cauchy formulas have been extended to Lorentzian constant curvature space, see \cite{Solanes-Teufel05}.
\begin{rem}
	It is tempting to ask that whether or not there is a similar definition for the quermassintegrals in the other Lorentzian space forms, i.e., the Minkowski space $\mathbb R^{n,1}$ or the anti de Sitter space $\mathbb H^{n,1}$. However, we are not able to define the surface area of the complete spacelike hypersurfaces in $\mathbb R^{n,1}$ and $\mathbb H^{n,1}$. For example, the slice $\{y^0=0\}$ in $\mathbb R^{n,1}$ or $\mathbb H^{n,1}$ is a complete noncompact spacelike hypersurface, which it is isometric to $\mathbb R^n$ or $\mathbb H^n$, respectively. 
\end{rem}

The $k$th {\em mean radius} of the smooth bounded domain $K$ in $\mathbb N^{n+1}(\e)$ or a spacelike domain $K$ in $\mathbb S^{n,1}_{+}$ can be similarly defined by
\begin{align}\label{s3:def-mean-radius-spaceforms}
\zeta_k(K)=f_k^{-1}(W_k(K)),
\end{align}
where $f_k:[0,\infty)\ra \mathbb R_{+}$ is a smooth function determined by $f_k(s)=W_k(B_s)$ in $\mathbb N^{n+1}(\e)$, where $B_s$ is the geodesic ball of radius $s$; or $f_k(s)=W_k(\{0\leq r\leq s\})$ in $\mathbb S^{n,1}_{+}$, where $\{0\leq r\leq s\}$ denotes the region enclosed by the slice $\{y^0=0\}$ and the slice $\{r=s\}$ in $\mathbb S_+^{n,1}$. In all above cases, it is easy to check that $f_k$ is monotone increasing, and $f_k^{-1}$ is the inverse of $f_k$. 

The key feature of quermassintegrals is the following elegant variational formulas in Riemannian space forms and in de Sitter space.
\begin{lem}
	Assume that one of the following holds:
	\begin{enumerate}[(i)]
		\item Let $K_t$ be a family of smooth bounded domains in $\mathbb N^{n+1}(\e)$ such that $\partial K_t$ is a family of smooth closed hypersurfaces. 
		\item Let $K_t$ be a family of smooth bounded domains in $\mathbb S^{n,1}_{+}$ such that $\partial K_t$ is a family of smooth closed spacelike hypersurfaces.
    \end{enumerate}
    Let $\frac{\partial}{\partial t}X$ be the variational vector field of $\partial K_t$. Then 
	\begin{align}\label{s3:variational-formula}
	\frac{d}{dt}W_k(K_t)=\frac{n+1-k}{n+1}\int_{\partial K_t} \langle \frac{\partial}{\partial t}X,\nu \rangle E_k d\mu_t, \quad k=0,1,\cdots,n.
	\end{align}	
\end{lem}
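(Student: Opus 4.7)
The natural approach is induction on $k$, exploiting the recursive definitions in \eqref{s1:quermassintegral-def} and \eqref{s1:quermassintegral-def-de-Sitter}. Writing $\tau=\e$ in the Riemannian case and $\tau=-1$ in the de Sitter case, both definitions assume the uniform shape
\[
W_{k+1}(K)=\frac{1}{n+1}\int_{\partial K}E_{k}\,d\mu+\tau\,\frac{k}{n+2-k}W_{k-1}(K),\qquad k\ge 1,
\]
which suggests treating the two settings in parallel throughout.

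\textbf{Base cases.} For $k=0$, the classical first variation of volume gives $\frac{d}{dt}\Vol(K_t)=\int_{\partial K_t}\langle\partial_tX,\nu\rangle\,d\mu_t$. In the de Sitter setting one checks this directly by differentiating the graph expression $\Vol(K_t)=\int_{\mathbb S^n}\int_0^{\rho(t,\t)}\cosh^n r\sqrt{\det\s_{ij}}\,dr\,d\t$ and invoking \eqref{s2:radial-evolution} together with \eqref{s2:area-element}. For $k=1$, decompose $\partial_tX=\eta\nu+T^{\top}$ with $T^{\top}$ tangential; since the induced metric on $\partial K_t$ is Riemannian in both settings, the standard area evolution $\partial_t\sqrt{\det g_{ij}}=(\eta H+\divv T^{\top})\sqrt{\det g_{ij}}$ applies and the tangential divergence integrates to zero on the closed hypersurface.

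\textbf{Inductive step.} Assuming the formula at level $k-1$, it suffices to establish
\[
\frac{d}{dt}\int_{\partial K_t}E_k\,d\mu_t=(n-k)\int_{\partial K_t}\langle\partial_tX,\nu\rangle E_{k+1}\,d\mu_t-\tau k\int_{\partial K_t}\langle\partial_tX,\nu\rangle E_{k-1}\,d\mu_t.
\]
Indeed, differentiating the uniform recursion and combining with the induction hypothesis $\frac{d}{dt}W_{k-1}(K_t)=\frac{n+2-k}{n+1}\int\langle\partial_tX,\nu\rangle E_{k-1}\,d\mu_t$ cancels the $E_{k-1}$ contributions and leaves exactly \eqref{s3:variational-formula} at level $k+1$. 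Since both sides depend only on the intrinsic geometry of $\partial K_t$, the tangential component of $\partial_tX$ contributes nothing and one may assume $\partial_tX=\eta\nu$. The computation then rests on two ingredients: the evolution of the Weingarten tensor $h_i^{\,j}$ under a normal variation (which produces a Hessian of $\eta$ plus curvature terms coming from \eqref{s2:Gauss-equation}), and the divergence-free property of the Newton transformation $T_k$ in an ambient space of constant curvature, itself a direct consequence of the Codazzi equation \eqref{s2:Codazzi-formula}. Contracting with $\dot h_i^{\,j}$ and integrating against $\eta$ converts the Hessian contribution into a total divergence which integrates away, leaving precisely the claimed identity.

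\textbf{Main obstacle.} The principal care lies in tracking the sign $\s=\langle\nu,\nu\rangle$ through the Gauss and Codazzi equations in the Lorentzian setting: for a spacelike hypersurface in $\mathbb S_+^{n,1}$ one has $\s=-1$ while the ambient sectional curvature is $+1$, so the lower-order curvature term produced when evolving $h_i^{\,j}$ enters with the \emph{opposite} sign to the spherical case. This is exactly the sign flip that converts the ambient constant $\e=+1$ of de Sitter space into the coefficient $\tau=-1$ appearing in \eqref{s1:quermassintegral-def-de-Sitter}, thereby validating the uniform treatment and closing the induction.
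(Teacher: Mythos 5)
Your proposal follows the same skeleton as the paper's proof: induction on $k$ using the inductive definitions of the quermassintegrals, with the only nontrivial input being the first variation of the curvature integrals $\int_{\partial K_t}E_k\,d\mu_t$. The place where you diverge is how that input is obtained. The paper simply cites it — \cite[Theorem A]{Reilly73} for the Riemannian space forms and \cite[Proposition 2]{Solanes-Teufel05} (together with a remark that the computation parallels Reilly's) for the de Sitter case — and otherwise only carries out the short inductive bookkeeping and the direct computation of $\frac{d}{dt}\Vol(K_t)$ in de Sitter space from the graph representation. You instead sketch a derivation of that first-variation formula from scratch (evolution of the Weingarten tensor, divergence-free property of the Newton transformation, integration by parts). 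That is the standard Reilly-type argument, and it is a reasonable self-contained route; but as written it is only an outline, whereas the paper's approach is genuinely complete because the citation closes the gap.

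One place where your outline is thinner than it appears: you write the normal variation as $\partial_tX=\eta\nu+T^{\top}$ and carry $\eta$ through the base case and the inductive step, while the quantity appearing in \eqref{s3:variational-formula} is $f=\langle\partial_tX,\nu\rangle$. These are related by $f=\s\eta$ with $\s=\langle\nu,\nu\rangle=\pm1$, and in the de Sitter case $\s=-1$, so each normal-variation computation picks up a factor of $\s$ when re-expressed in terms of $f$. Your ``Main obstacle'' paragraph correctly flags that the Gauss-equation term flips sign (accounting for $\tau=-1$ versus the ambient curvature $\e=+1$), but it does not address this separate $\eta$-versus-$f$ sign, which appears already in the $k=1$ base case (the area variation $\partial_t\,d\mu=\eta H\,d\mu$ becomes $\s f H\,d\mu$). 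If you carry the derivation through, you must reconcile this sign explicitly against the paper's direct graph computation of $\frac{d}{dt}\Vol(K_t)$ via \eqref{s2:radial-evolution} and \eqref{s2:area-element}, which fixes the convention. The unified $\tau$-notation is a pleasant streamlining but is not load-bearing; the paper achieves the same effect by treating the two cases in parallel.
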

\begin{proof}
	For simplicity, we denote by $f=\langle\frac{\partial}{\partial t}X,\nu \rangle$, where $\langle \cdot,\cdot \rangle$ denotes the Euclidean inner product for $\mathbb S^{n+1}$, $\mathbb R^{n+1}$ and the Minkowski inner product for $\mathbb H^{n+1}$,  $\mathbb S^{n,1}_{+}$, respectively.
	
	(i) By \cite[Theorem A]{Reilly73}, the curvature integral of $\partial K_t$ evolves by
	\begin{align}\label{s2:curvature-integral}
	\frac{d}{dt}\int_{\partial K_t} E_k d\mu_t = \int_{\partial K_t} \( (n-k)E_{k+1}-\e kE_{k-1}   \)f d\mu_t, \quad k=1,\cdots,n.
	\end{align} 
	The volume of $K_t$ evolves by (see e.g. \cite{Barbosa-doCarmo-Eschenburg88})
	\begin{align}\label{s2:volume}
	\frac{d}{dt}\operatorname{Vol}(K_t)=\int_{\partial K_t} f d\mu_t.
	\end{align}
	Now the variational formula \eqref{s3:variational-formula} can be proved by induction (see \cite[Proposition 3.1]{Wang-Xia14}). In view of \eqref{s2:curvature-integral} and \eqref{s2:volume}, it is true for $k=0, 1$. Assume that it is true for $k-1$, that is,
	\begin{align}\label{s3:induction-assumption}
	\frac{d}{dt}W_{k-1}(K_t)=\frac{n+1-(k-1)}{n+1}\int_{\partial K_t} E_{k-1}f d\mu_t,
	\end{align}
	then by using \eqref{s1:quermassintegral-def}, \eqref{s2:curvature-integral} and the inductive assumption \eqref{s3:induction-assumption}, we can compute that
	\begin{align*}
	\frac{d}{dt}W_{k+1}(K_t)=&\frac{1}{n+1}\frac{d}{dt}\int_{\partial K_t}E_k d\mu_t+\e \frac{k}{n+2-k}\frac{d}{dt}W_{k-1}(K_t)\\
	=&\frac{1}{n+1}\int_{\partial K_t}\((n-k)E_{k+1}-\e kE_{k-1}\) f d\mu_t+\e \frac{k}{n+1}\int_{\partial K_t} E_{k-1}f d\mu_t \\
	=&\frac{n+1-(k+1)}{n+1}\int_{\partial K_t} E_{k+1}f d\mu_t.
	\end{align*}
		
	(ii) When the ambient space is $\mathbb S_{+}^{n,1}$, along the variational vector field $\frac{\partial}{\partial t}X$, the volume of the spacelike domain $K_t$ evolves by
	\begin{align*}
	\frac{d}{dt}\Vol(K_t)=&\frac{d}{dt}\int_{\mathbb S^n}\int_0^{\rho(t,\t)}	\cosh^n r \sqrt{\det(\s_{ij})}dr d\t\\
	=&\int_{\mathbb S^n}\cosh^n \rho(t,\t) \frac{\partial\rho}{\partial t} \sqrt{\det(\s_{ij})} d\t \\
	=&\int_{\mathbb S^n}\cosh^n \rho(t,\t) f v \sqrt{\det(\s_{ij})} d\t \\
	=&\int_{\partial K_t} f d\mu,
	\end{align*}
	where we used \eqref{s2:radial-evolution} and \eqref{s2:area-element}. By using the similar calculation as in \cite[Theorem A]{Reilly73}, the curvature integral of $\partial K_t$ evolves by (see e.g. \cite[Proposition 2]{Solanes-Teufel05})
	\begin{align}\label{s2:curvature-integral-de-Sitter}
	\frac{d}{dt}\int_{\partial K_t} E_k d\mu_t = \int_{\partial K_t} \( (n-k)E_{k+1}+kE_{k-1}   \)f d\mu_t, \quad k=1,\cdots,n.
	\end{align} 
	Then the remaining proof is the same as (i). 
\end{proof}

For any smooth bounded domains in $\mathbb N^{n+1}(\e)$, the quermassintegral $W_k$ can be expressed as follows (see e.g. \cite[Corollary 8]{Solanes06} for the case in $\mathbb H^{n+1}$):
\begin{itemize} 
	\item for $1\leq m\leq n$ and $m$ being even,
	\begin{align}\label{s3:quermassintegral-expression-even}
	W_m(K)=&\frac{1}{n+1}\sum_{i=0}^{\frac{m}{2}-1}\e^i \frac{(m-1)!!(n+1-m)!!}{(m-1-2i)!!(n+1-m+2i)!!}\int_{\partial K}E_{m-1-2i}d\mu \nonumber\\
	&+\e^{\frac{m}{2}}\frac{(m-1)!!(n+1-m)!!}{(n+1)!!}\Vol(K);
	\end{align}
	\item for $1\leq m\leq n$ and $m$ being odd,
	\begin{align}\label{s3:quermassintegral-expression-odd}
	W_m(K)=\frac{1}{n+1}\sum_{i=0}^{\frac{m-1}{2}}\e^i \frac{(m-1)!!(n+1-m)!!}{(m-1-2i)!!(n+1-m+2i)!!}\int_{\partial K}E_{m-1-2i}d\mu.
	\end{align}
\end{itemize}
Here the notation $k!!$ means the product of all odd(even) integers up to odd(even) $k$. In view of \eqref{s1:quermassintegral-def-de-Sitter}, the quermassintegrals in $\mathbb S^{n,1}_{+}$ coincide with those in $\mathbb H^{n+1}$, i.e.,
\begin{itemize} 
	\item for $1\leq m\leq n$ and $m$ being even,
	\begin{align}\label{s3:quermassintegral-expression-even-de-Sitter}
	W_m(K)=&\frac{1}{n+1}\sum_{i=0}^{\frac{m}{2}-1}(-1)^i \frac{(m-1)!!(n+1-m)!!}{(m-1-2i)!!(n+1-m+2i)!!}\int_{\partial K}E_{m-1-2i}d\mu \nonumber\\
	&+(-1)^{\frac{m}{2}}\frac{(m-1)!!(n+1-m)!!}{(n+1)!!}\Vol(K);
	\end{align}
	\item for $1\leq m\leq n$ and $m$ being odd,
	\begin{align}\label{s3:quermassintegral-expression-odd-de-Sitter}
	W_m(K)=\frac{1}{n+1}\sum_{i=0}^{\frac{m-1}{2}}(-1)^i \frac{(m-1)!!(n+1-m)!!}{(m-1-2i)!!(n+1-m+2i)!!}\int_{\partial K}E_{m-1-2i}d\mu.
	\end{align}
\end{itemize}

\section{Proofs of Theorem \ref{thm-sphere} and Theorem \ref{thm-hyperbolic}}\label{sec:4}
Along the flow \eqref{s2:curvature-flow}, we have the following identities for convex bodies and their dual bodies.
\begin{lem}\label{s4:lem-identity}
	Assume that one of the following holds:
	\begin{enumerate}[(i)]
		\item Let $K_t$ be a family of smooth bounded domain with strictly convex boundary $\partial K_t$ in $\mathbb S^{n+1}$, and $K_t^\ast$ the dual body of $K_t$ in $\mathbb S^{n+1}$;
		\item Let $K_t$ be a family of smooth bounded domain with strictly convex boundary $\partial K_t$ in $\mathbb H^{n+1}$, and $K_t^\ast$ the dual body of $K_t$ in $\mathbb S^{n,1}_{+}$;
		\item Let $K_t$ be a family of smooth bounded domain with strictly convex and spacelike boundary $\partial K_t$ in $\mathbb S^{n,1}_{+}$, and $K_t^\ast$ the dual body of $K_t$ in $\mathbb H^{n+1}$.
\end{enumerate}	
	    Assume that $X:M_0 \times [0,T) \ra N$ is a family of smooth embeddings satisfying \eqref{s2:curvature-flow} such that $X_t(M_0)=\partial K_t$, then there holds
	\begin{align}\label{s4:evol-identity}
	\frac{d}{dt}\left[ (k+1)W_k(K_t)+(n+1-k)W_{n-k}(K_t^\ast)\right]=0, \quad k=0,\cdots,n.
	\end{align}
\end{lem}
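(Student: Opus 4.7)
The plan is to apply the variational formula \eqref{s3:variational-formula} to both $K_t$ and $K_t^\ast$ separately, and then to exploit the dual relations $\kappa_i^\ast=\kappa_i^{-1}$ and $\mathcal{F}^\ast=-\mathcal{F}$ together with the Gauss-map area change to show that the two resulting integrands differ by exactly a factor of $-(k+1)/(n+1-k)$, so that the weighted combination vanishes.

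For the primal flow \eqref{s2:curvature-flow}, since $\langle\nu,\nu\rangle=\sigma$ and $\sigma^2=1$ in every case, the normal speed is $\langle\partial_tX,\nu\rangle=-\mathcal{F}$, and \eqref{s3:variational-formula} gives
\begin{equation*}
\frac{d}{dt}W_k(K_t)=-\frac{n+1-k}{n+1}\int_{\partial K_t}\mathcal{F}\,E_k\,d\mu.
\end{equation*}
For the dual flow \eqref{s2:dual-flow}, the tangential term contributes nothing to $\langle\partial_tX^\ast,\nu^\ast\rangle$, and $(\sigma^\ast)^2=1$, so the normal speed of $M_t^\ast$ equals $-\mathcal{F}^\ast=\mathcal{F}$ by \eqref{s2:dual-function}. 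Applying \eqref{s3:variational-formula} in $N^\ast$ and replacing $k$ by $n-k$ yields
\begin{equation*}
\frac{d}{dt}W_{n-k}(K_t^\ast)=\frac{k+1}{n+1}\int_{\partial K_t^\ast}\mathcal{F}\,E_{n-k}^{\ast}\,d\mu^\ast.
\end{equation*}

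Next, I will convert the dual integral back to an integral over $\partial K_t$. From the standard identity for elementary symmetric functions together with $\kappa_i^\ast=\kappa_i^{-1}$ in \eqref{s2:duality-sphere}, \eqref{s2:duality-hyperbolic}, \eqref{s2:duality-deSitter}, one obtains
\begin{equation*}
E_{n-k}(\kappa^\ast)=\frac{E_k(\kappa)}{E_n(\kappa)}.
\end{equation*}
Furthermore, using $X^\ast_i=h_i^kX_k$ (the Weingarten equation \eqref{s2:Weingarten-eq}) and the fact that on $N$ and $N^\ast$ the induced metrics come from the same ambient bilinear form, we get $g^\ast_{ij}=h_i^kh_j^l g_{kl}$. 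Since $\partial K_t$ is strictly convex, $\det(h_i^j)=\prod_i\kappa_i=E_n>0$, so
\begin{equation*}
d\mu^\ast=E_n\,d\mu,
\end{equation*}
and therefore $E_{n-k}^\ast\,d\mu^\ast=E_k\,d\mu$. (In all three cases $g^\ast_{ij}$ is positive definite, since $M^\ast$ lies either in a Riemannian ambient space or is spacelike.) Inserting this into the two derivative formulas gives
\begin{equation*}
(k+1)\frac{d}{dt}W_k(K_t)+(n+1-k)\frac{d}{dt}W_{n-k}(K_t^\ast)=\left[-\tfrac{(k+1)(n+1-k)}{n+1}+\tfrac{(n+1-k)(k+1)}{n+1}\right]\int_{\partial K_t}\mathcal{F}\,E_k\,d\mu=0.
\end{equation*}

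The only genuine difficulty is bookkeeping of the sign conventions across the three cases (i)--(iii), where the pair $(\sigma,\sigma^\ast)$ takes different values and, in the Lorentzian setting, the unit normal is future-directed and timelike. The computation above unifies the three cases because only the squares $\sigma^2$ and $(\sigma^\ast)^2$ appear, and because in each case the duality $\mathcal{F}^\ast=-\mathcal{F}$, the symmetry $h_{ij}=h_{ij}^\ast$, and the positive definiteness of the dual induced metric are already recorded in Theorems \ref{s2:thm-duality-sphere}, \ref{s2:thm-duality-hyperbolic-space}, and \ref{s2:thm-duality-de-Sitter-space}.
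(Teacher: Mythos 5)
Your proposal is correct and follows essentially the same route as the paper: apply the variational formula \eqref{s3:variational-formula} to $K_t$ and $K_t^\ast$, observe that the normal speed of the dual flow \eqref{s2:dual-flow} is $-\mathcal{F}^\ast=\mathcal{F}$ (the tangential term not contributing), and then use the duality relations $E_{n-k}(\kappa^\ast)=E_k(\kappa)/E_n(\kappa)$ and $d\mu^\ast=E_n(\kappa)\,d\mu$ to see that the weighted combination cancels. The only minor difference is that you explicitly derive $d\mu^\ast=E_n\,d\mu$ from $g^\ast_{ij}=h_i^kh_j^lg_{kl}$, whereas the paper quotes this fact directly from Gerhardt's duality theorems; both are valid.
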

\begin{proof}
	Along the flow \eqref{s2:curvature-flow} in $N$, i.e.,
	\begin{align*}
	\frac{\partial}{\partial t}X=-\s \mathcal{F}\nu,
	\end{align*}
	it follows from \eqref{s3:variational-formula} that
	\begin{align}\label{s4:quermassintegral-direct-flow}
	\frac{d}{dt}W_k(K_t)=&\frac{n+1-k}{n+1}\int_{\partial K_t} E_k(\k) \langle \frac{\partial}{\partial t}X,\nu\rangle d\mu_t\nonumber \\ 
	=&-\frac{n+1-k}{n+1}\int_{M_t} E_k(\k) \mathcal{F} d\mu_t,
	\end{align}
	where $\langle \cdot,\cdot\rangle$ denotes the Euclidean inner product for $\mathbb S^{n+1}$ and the Minkowski inner product for $\mathbb H^{n+1}$ and $\mathbb S^{n,1}_{+}$, respectively.
	 
	Corresponding to the flow \eqref{s2:curvature-flow}, the dual flow \eqref{s2:dual-flow} in the dual ambient space $N^\ast$ is 
	\begin{align}
	\frac{\partial}{\partial t}X^\ast=-\s^\ast \mathcal{F}^\ast \nu^\ast-(b^\ast)^{kl}\mathcal{F}^\ast_k X^\ast_l.
	\end{align}
	It also follows from \eqref{s3:variational-formula} that
	\begin{align}\label{s4:quermassintegral-dual-flow}
	\frac{d}{dt}W_{n-k}(K^\ast_t)=&\frac{n+1-(n-k)}{n+1}\int_{\partial K^\ast_t} E_{n-k}(\k^\ast) \langle \frac{\partial}{\partial t}X^\ast, \nu^\ast \rangle d\mu^\ast_t \nonumber\\
	=&-\frac{n+1-(n-k)}{n+1}\int_{\partial K^\ast_t} E_{n-k}(\k^\ast) \mathcal{F}^\ast d\mu^\ast_t \nonumber\\
	=&\frac{k+1}{n+1}\int_{\partial K_t} E_k(\k) \mathcal{F} d\mu_t,
	\end{align}
	where we used \eqref{s2:dual-function} and 
	\begin{align*}
	E_{n-k}(\k^\ast)=\frac{E_k(\k)}{E_{n}(\k)}, \quad d\mu^\ast_t=E_n(\k) d\mu_t
	\end{align*}
	due to the duality relations \eqref{s2:duality-sphere}, \eqref{s2:duality-hyperbolic} and \eqref{s2:duality-deSitter}. Therefore, the identity \eqref{s4:evol-identity} follows from \eqref{s4:quermassintegral-direct-flow} and \eqref{s4:quermassintegral-dual-flow}.	
\end{proof}

\begin{proof}[Proof of Theorem \ref{thm-sphere}]
	Let $M=X_0(M_0)$ be a strictly convex hypersurface in $\mathbb S^{n+1}$ and $\hat{M}$ the convex body of $M$. By Lemma \ref{s4:lem-identity}, the functional $(k+1)W_k(\hat{M}_t)+(n+1-k)W_{n-k}(\hat{M}_t^\ast)$ remains invariant for any flow preserving the strict convexity. To determine the value of this functional, we consider the harmonic mean curvature flow in the sphere, i.e., 
	\begin{align*}
	\frac{\partial}{\partial t}X=-\frac{E_{n}}{E_{n-1}}\nu.
	\end{align*}
	Then by the convergence result of contracting curvature flows in the sphere due to Gerhardt \cite{Ge15}, we conclude that $\hat{M}_t$ shrinks to a fixed point $z\in \mathbb S^{n+1}$, while the dual body $\hat{M}^\ast_t$ expands to the half ball $\mathcal{H}(-z)$ as $t\ra T^\ast$. Here $T^\ast<\infty$ is both the maximal existence time of the contracting curvature flow and its dual flow. Moreover, the flow hypersurfaces $M_t$ and its polar hypersurfaces $M^\ast_t$ are strictly convex for all $t\in [0,T^\ast)$.
	
	Since $\int_{\partial B_{r}} E_m d\mu=\omega_n \sin^{n-m}r \cos^m r$, we have 
	\begin{align*}
	\lim_{r\ra 0^+}\int_{\partial B_{r}} E_m d\mu=\left\{\begin{aligned}
	&0, \quad  &\text{if $0\leq m<n$};\\
	&\omega_n, \quad &\text{if $m=n$},
	\end{aligned}\right.
	\end{align*}
	and
	\begin{align*}
	\lim_{r\ra (\frac{\pi}{2})^-}\int_{\partial B_{r}} E_m d\mu=\left\{\begin{aligned}
	&0, \quad  &\text{if $0<m\leq n$};\\
	&\omega_n, \quad &\text{if $m=0$}.
	\end{aligned}\right.
	\end{align*}
	For $1\leq m\leq n$, in view of \eqref{s3:quermassintegral-expression-even}-- \eqref{s3:quermassintegral-expression-odd}, we have
	\begin{align*}
	\lim_{r\ra 0^+}W_m(B_r)=0, 
	\end{align*}
	and
	\begin{align*}
	\lim_{r\ra (\frac{\pi}{2})^-}W_m(B_r)=\left\{\begin{aligned} 
	&\frac{\omega_n}{n+1}\frac{(m-1)!!(n+1-m)!!}{n!!},  \quad \text{if $m$ is odd};\\
	&\frac{(m-1)!!(n+1-m)!!}{(n+1)!!}\Vol(B_{\pi/2}),  \quad \text{if $m$ is even}.
	\end{aligned}  \right.
	\end{align*}
	Using the fact that $\Vol(B_{\pi/2})=\frac{1}{2}|\mathbb S^{n+1}|$, and $\omega_i=|\mathbb S^i|=\frac{(i+1)\pi^{(i+1)/2}}{\G((i+1)/2+1)}$, where $\G(\cdot)$ is the Gamma function. A direct calculation gives
	\begin{align*}
	\frac{\omega_{n+1}}{\omega_n}=\frac{(n+2)\pi^{(n+2)/2}}{\G(\frac{n+2}{2}+1)} \cdot \frac{\G(\frac{n+1}{2}+1)}{(n+1)\pi^{(n+1)/2}} = \left\{ \begin{aligned}
	&\frac{2(n-1)!!}{n!!}, \quad   \text{if $n$ is odd},\\
	&\frac{\pi(n-1)!!}{n!!}, \quad   \text{if $n$ is even}.
	\end{aligned}\right.
	\end{align*}
	Then we have
	\begin{align*}
	\lim_{r\ra (\frac{\pi}{2})^-}W_m(B_r)=\left\{\begin{aligned} 
	&\frac{\pi}{2}\frac{\omega_n}{n+1} \frac{(m-1)!!(n+1-m)!!}{n!!},  \quad &\text{if $n$ is even and $m$ is even};\\
	&\frac{\omega_n}{n+1}\frac{(m-1)!!(n+1-m)!!}{n!!},  \quad &\text{otherwise}.
	\end{aligned}  \right.
	\end{align*}
	Therefore, we get
	\begin{align*}
	 &(k+1)W_k(\hat{M})+(n+1-k)W_{n-k}(\hat{M}^\ast)\\
	=&\lim_{r\ra 0^+}\left[(k+1)W_k(B_r)+(n+1-k)W_{n-k}(B_{\frac{\pi}{2}-r})\right]\\
	=&(n+1-k)\lim_{r \ra (\frac{\pi}{2})^-}W_{n-k}(B_r)\\
	=&\left\{ \begin{aligned}
		&\frac{\pi}{2}\frac{\omega_n}{n+1} \frac{(n-k+1)!!(k+1)!!}{n!!},  \quad &\text{if $n$ is even and $k$ is even};\\
	&\frac{\omega_n}{n+1}\frac{(n-k+1)!!(k+1)!!}{n!!},  \quad &\text{otherwise}.
	\end{aligned}  \right.
	\end{align*}
	In particular, for any pairs $(\hat{M},\hat{M}^\ast)$ of strictly convex bodies and their dual bodies in $\mathbb S^{n+1}$, the following identities hold
	\begin{align*}
	(k+1)W_k(\hat{M})+(n+1-k)W_{n-k}(\hat{M}^\ast)=C_{n,k}, \quad k=0,1,\cdots,n,
	\end{align*} 
	where $C_{n,k}$ are constants depending only on $n,k$. If the geodesic ball $B_r$ satisfies $W_k(B_r)=W_k(\hat{M})$, then $W_{n-k}(B_{\frac{\pi}{2}-r})=W_{n-k}(\hat{M}^\ast)$.  Therefore, we get 
	\begin{align*}
	\zeta_k(\hat{M})+\zeta_{n-k}(\hat{M}^\ast)=\frac{\pi}{2}, \quad 0\leq k \leq n,
	\end{align*}
	which is equivalent to \eqref{s1:identity-sphere}. This completes the proof of Theorem \ref{thm-sphere}.
\end{proof}

\begin{proof}[Proof of Theorem \ref{thm-hyperbolic}]
    Let the initial hypersurface $M=X_0(M_0)$ be strictly convex in $\mathbb H^{n+1}$ and $\hat{M}$ the convex body of $M$. Similar as above, we consider the harmonic mean curvature flow in $\mathbb H^{n+1}$:
    \begin{align*}
    \frac{\partial}{\partial t} X=-\frac{E_{n}}{E_{n-1}}\nu,
    \end{align*}
    By the convergence result of Yu \cite[Theorem 1.2]{Yu2016}, the contracting hypersurface $M_t=X_t(M_0)$ in $\mathbb H^{n+1}$ shrinks to a point $x_0$ as $t\ra T^\ast$, where $T^\ast<\infty$ is the maximal existence time of this flow. By using a Lorenz boost, we may assume this point $x_0$ is the Beltrami point. Then the polar hypersurface $M_t^\ast$ of the dual flow is contained in $\mathbb S^{n,1}_{+}$ and converge to the coordinate slice $\{r=0\}$ as $t\ra T^\ast$. Moreover, the flow hypersurface $M_t$ in $\mathbb H^{n+1}$ is strictly convex, while its polar hypersurface $M^\ast_t$ in $\mathbb S^{n,1}_{+}$ is strictly convex and spacelike for all $t\in [0,T^\ast)$. Let $\hat{M}_t$ be the convex body of $M_t$ in $\mathbb H^{n+1}$, and $\hat{M}_t^\ast$ its dual body in $\mathbb S^{n,1}_{+}$, respectively.
    
    By Lemma \ref{s4:lem-identity}, the functional $(k+1)W_k(\hat{M}_t)-(n+1-k)W_{n-k}(\hat{M}_t^\ast)$ remains invariant for any flow which preserves the strict convexity. To determine the value of this functional, we first observe that 
    \begin{align*}
    \int_{\partial B_r}E_m(\k)d\mu=&\omega_n \sinh^{n-m}r \cosh^m r, \\ \int_{\{r=r^\ast\}}E_m(\k^\ast) d\mu^\ast=&\omega_n \cosh^{n-m} r^\ast \sinh^m r^\ast.
    \end{align*}
    It follows that 
    \begin{align*}
    \lim_{r\ra 0^+}\int_{\partial B_{r}} E_m(\k) d\mu=\left\{\begin{aligned}
    &0, \quad  &\text{if $0\leq m<n$};\\
    &\omega_n, \quad &\text{if $m=n$},
    \end{aligned}\right.
    \end{align*}
    and
    \begin{align*}
    \lim_{r^\ast \ra 0^{+}}\int_{\{r=r^\ast\}} E_m(\k^\ast) d\mu^\ast=\left\{\begin{aligned}
    &0, \quad  &\text{if $0<m\leq n$};\\
    &\omega_n, \quad &\text{if $m=0$}.
    \end{aligned}\right.
    \end{align*}
    For $1\leq m\leq n$, in view of \eqref{s3:quermassintegral-expression-even}--\eqref{s3:quermassintegral-expression-odd-de-Sitter}, we have
    \begin{align*}
    \lim_{r\ra 0^+}W_m(B_r)=0, 
    \end{align*}
    and
    \begin{align*}
    \lim_{r^\ast\ra 0^+}W_m(\{r=r^\ast\})=\left\{\begin{aligned} 
    &(-1)^\frac{m-1}{2}\frac{\omega_n}{n+1}\frac{(m-1)!!(n+1-m)!!}{n!!},  \quad &\text{if $m$ is odd};\\
    &0,  \quad &\text{if $m$ is even}.
    \end{aligned}  \right.
    \end{align*}
    Therefore, we get
    \begin{align*}
    &(k+1)W_k(\hat{M})+(n+1-k)W_{n-k}(\hat{M}^\ast)\\=&(k+1)\lim_{r\ra 0^+}W_k(B_r)+(n+1-k)\lim_{r^\ast \ra 0^+} W_{n-k}(\{0\leq r\leq r^\ast\})\\
    =&\left\{ \begin{aligned}
    &(-1)^\frac{n-k-1}{2}\frac{\omega_n}{n+1}\frac{(n-k+1)!!(k+1)!!}{n!!},  \quad &\text{if $n-k$ is odd};\\
    &0,  \quad &\text{otherwise}.
    \end{aligned}  \right.
    \end{align*}
    This shows that the following identities 
    \begin{align*}
    (k+1)W_k(\hat{M})+(n+1-k)W_{n-k}(\hat{M}^\ast)=C_{n,k}, \quad k=0,1,\cdots,n
    \end{align*}
    hold for all pairs $(\hat{M},\hat{M}^\ast)$ of strictly convex bodies in $\mathbb H^{n+1}$ and their dual bodies in $\mathbb S^{n,1}_{+}$, where $C_{n,k}$ are constants depending only on $n$, $k$. If $B_r$ is a geodesic ball in $\mathbb H^{n+1}$ such that $W_k(B_r)=W_k(\hat{M})$, then $W_{n-k}(\hat{M}^\ast)=W_{n-k}(\{0\leq r\leq r^\ast\})$, where $\{0\leq r\leq r^\ast\}$ is the dual body of $B_r$. Note that the geodesic sphere $\partial B_r$ in $\mathbb S^{n+1}$ is strictly convex, while $\{r=r^\ast\}$ in $\mathbb S^{n,1}_{+}$ is its polar hypersurface. Moreover, the principal curvatures of $\partial B_r$ and $\{r=r^\ast\}$ are
    \begin{align*}
    \k_i=\coth r, \quad  \k^{\ast}_i=\tanh r^{\ast}. 
    \end{align*}
    In view of the duality \eqref{s2:duality-hyperbolic} and \eqref{s2:duality-deSitter} in hyperbolic/de Sitter space, we obtain 
    \begin{align*}
    \coth \zeta_k(K) \cdot \tanh \zeta_{n-k}(K^\ast)=1, \quad 0\leq k \leq n.
    \end{align*}
    This completes the proof of Theorem \ref{thm-hyperbolic}.
\end{proof}

\section{Proofs of Theorems \ref{cor-BS-ineq-sphere-extension}--\ref{cor-BS-ineq-deSitter-quermassintegral-ineq-II}}\label{sec:5}
The quermassintegral inequalities \cite{Alexandrov1937,Alexandrov1938} for convex domains in Euclidean space are fundamental in classical geometry. There have been some interests in extending the original quermassintegral inequalities to non-convex domains (see \cite{Trudinger1994}). In \cite{GL09}, Guan and Li extend these inequalities to non-convex starshaped domains in $\mathbb R^{n+1}$. There are generalizations of such inequalities to other ambient spaces, such as the sphere \cite{Makowski-Scheuer2016,CGLS21,Chen-Sun21,GL21,Wei-Xiong2015}, the hyperbolic space \cite{ACW2018,AHL2020,BGL19,HL19,Hu-Li-2022,Hu-Li-Wei2020,Wang-Xia14,Wei-Xiong2015,GeWW14,GL21} and the de Sitter space \cite{Scheuer19,Lambert-Scheuer2021}. We collect these results as follows.

\begin{lem}\label{s5:key-lemma-ineq-I}
	If $K$ is a smooth bounded and strictly convex domain in $\mathbb S^{n+1}$, then the following inequalities hold:
		\begin{enumerate}[(i)]
		\item \cite{BGL19,GL21}
		\begin{align}\label{quermassintegral-ineq-sphere-1}
	    W_n(K)\geq f_n \circ f_l^{-1}(W_l(K)),  \quad 0\leq l<n,
		\end{align}	
		\item \cite{CGLS21,Chen-Sun21}
		\begin{align}\label{quermassintegral-ineq-sphere-2}
		W_k(K) \geq f_k\circ f_0^{-1}(W_0(K)),  \quad 0\leq k<n,
		\end{align}
		\item \cite{Chen-Sun21}
		\begin{align}\label{quermassintegral-ineq-sphere-II}
		W_{m+1}(K) \geq f_{m+1}\circ f_{m-1}^{-1}(W_{m-1}(K)), \quad 1\leq m \leq n-1.
		\end{align}	
		\end{enumerate}
		Equality holds if and only if $K$ is a geodesic ball.
\end{lem}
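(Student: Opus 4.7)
The statement is a compilation of three families of quermassintegral inequalities in $\mathbb{S}^{n+1}$ for strictly convex domains, each due to the cited authors. My plan for the proof is therefore not to re-derive the inequalities from scratch, but to record how each piece is assembled from the existing literature, and to indicate the unifying methodology that one would invoke if presenting a self-contained argument.

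For part (i), I would appeal to the locally constrained curvature flow approach of Brendle, Guan and Li \cite{BGL19}, extended by Guan and Li \cite{GL21}. The idea is to run a flow of the form $\partial_t X = (\phi - F(\kappa))\nu$ on strictly convex hypersurfaces of $\mathbb{S}^{n+1}$, where $F$ is a suitable curvature function and $\phi$ is a support-function type quantity chosen so that $W_l$ is preserved while $W_n$ is non-decreasing along the flow. Using the variational formula \eqref{s3:variational-formula} together with the Newton--MacLaurin inequalities $E_{j+1}E_{j-1}\leq E_j^2$, one shows monotonicity of $W_n$, and then invokes the long-time existence and smooth convergence to a geodesic ball to conclude \eqref{quermassintegral-ineq-sphere-1} with the rigidity statement. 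For part (iii), I would cite the analogous construction in Chen--Sun \cite{Chen-Sun21}, which treats the jump by two indices $W_{m-1}\to W_{m+1}$ by a flow preserving $W_{m-1}$; here the monotonicity of $W_{m+1}$ again follows from a Newton--MacLaurin-type estimate combined with the quermassintegral variational formula in the sphere.

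For part (ii), I would follow Chen--Guan--Li--Scheuer \cite{CGLS21} or Chen--Sun \cite{Chen-Sun21}, where volume-preserving curvature flows for strictly convex hypersurfaces in the sphere are constructed, yielding $W_k\geq f_k\circ f_0^{-1}(W_0)$ by the same preservation/monotonicity scheme. The step I would expect to be the main obstacle, were one to redo these arguments from scratch, is preserving strict convexity along the flow and obtaining uniform curvature bounds: these global estimates are what force one to stay in the strictly convex regime and are the technical heart of the cited papers. Since the present lemma is used only as a black box in the subsequent sections, the proof proper can consist of a one-line citation to each of \cite{BGL19,GL21,CGLS21,Chen-Sun21}, with the rigidity statements read off from the corresponding convergence theorems.
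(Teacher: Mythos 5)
Your proposal matches the paper exactly in spirit: the paper itself gives no proof of this lemma, treating it as a summary of known quermassintegral inequalities in the sphere and simply citing \cite{BGL19,GL21} for (i), \cite{CGLS21,Chen-Sun21} for (ii), and \cite{Chen-Sun21} for (iii). Your added sketch of the locally constrained curvature-flow methodology (preserve one quermassintegral, use Newton--MacLaurin to monotonize another, invoke convergence to a geodesic ball for rigidity) is an accurate description of what those references do, but it is not part of the paper's proof, which is purely citation-based.
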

\begin{rem}
	In view of the identities \eqref{s1:identity-sphere} in Theorem \ref{thm-sphere}, the inequality \eqref{quermassintegral-ineq-sphere-1} and the inequality \eqref{quermassintegral-ineq-sphere-2} are equivalent.
\end{rem}

\begin{lem}\label{s5:key-lemma-ineq-II}
\begin{enumerate}[(a)]
\item Let $K$ be a smooth bounded domain in $\mathbb H^{n+1}$. 
\begin{enumerate}[(i)]
\item \cite{Wang-Xia14,Hu-Li-Wei2020} If $\partial K$ is h-convex, then 
\begin{align}\label{quermassintegral-ineq-hyperbolic}
W_k(K) \geq f_k \circ f_l^{-1}(W_l(K)), \quad 0\leq l<k\leq n.
\end{align}
\item \cite{AHL2020,BGL19} If $\partial K$ is strictly convex, then 
\begin{align}\label{quermassintegral-ineq-hyperbolic-strictly-convex}
W_n(K) \geq f_n \circ f_l^{-1}(W_l(K)), \quad 0\leq l<n,
\end{align}
and
\begin{align}\label{quermassintegral-ineq-hyperbolic-strictly-convex-III}
W_{n-1}(K) \geq f_{n-1}\circ f_{n-1-2i}^{-1}(W_{n-1-2i}(K)), \quad 0<2i<n.
\end{align}
\item \cite{BGL19,GL21} If $\partial K$ is mean convex and starshaped, then 
\begin{align}\label{quermassintegral-ineq-hyperbolic-1-convex}
W_2(K) \geq f_2 \circ f_1^{-1}(W_1(K)).
\end{align}
\item \cite{LWX14} If $\partial K$ is $2$-convex and starshaped, then 
\begin{align}\label{quermassintegral-ineq-hyperbolic-2-convex}
W_3(K) \geq f_3 \circ f_1^{-1}(W_1(K)).
\end{align}
\end{enumerate}
Equality holds if and only if $K$ is a geodesic ball.

\item Let $K$ is a smooth bounded domain in $\mathbb S^{n,1}_{+}$.
\begin{enumerate}[(i)]
	\item \cite{Lambert-Scheuer2021} If $\partial K$ is spacelike, then 
	\begin{align}\label{quermassintegral-ineq-deSitter-I}
	W_0(K) \geq f_0\circ f_1^{-1}(W_1(K)).
	\end{align}
	\item \cite{Scheuer19} If $\partial K$ is mean convex and spacelike, then
\begin{align}\label{quermassintegral-ineq-deSitter-II}
W_1(K) \geq f_1\circ f_2^{-1}(W_2(K)).
\end{align}
\end{enumerate}
Equality holds if and only if $\partial K$ is isometric to a coordinate slice.
\end{enumerate}
\end{lem}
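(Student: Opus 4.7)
The plan is to use Theorem \ref{thm-hyperbolic} as a duality dictionary that converts each of the three de Sitter inequalities into a known hyperbolic quermassintegral inequality applied to the dual body. Since $\partial K$ is strictly convex and spacelike in $\mathbb S^{n,1}_{+}$, Theorem \ref{s2:thm-duality-de-Sitter-space} guarantees that $K^\ast \subset \mathbb H^{n+1}$ is a smooth, bounded, strictly convex domain with $(K^\ast)^\ast = K$. Applying the identity \eqref{s1:identity-hyperbolic} with $K^\ast$ in the role of the hyperbolic body and using that $\coth(x)\tanh(y)=1$ forces $x=y$ for positive reals, we get the clean mean-radius dictionary
\[
\zeta_k(K) \;=\; \zeta_{n-k}(K^\ast), \qquad 0 \le k \le n.
\]
Because each $f_k$ is monotone increasing, an inequality of the form $W_k(K) \le f_k \circ f_l^{-1}(W_l(K))$ is equivalent to $\zeta_k(K) \le \zeta_l(K)$, so the entire argument will be carried out in terms of mean radii.

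For \eqref{s2:quermassintegral-de-Sitter-ineq-I}, I would show $\zeta_k(K) \le \zeta_1(K)$ for $1 < k \le n$. Via the dictionary this rewrites as $\zeta_{n-k}(K^\ast) \le \zeta_{n-1}(K^\ast)$, and with $l := n-k \in \{0,1,\ldots,n-2\}$ this is exactly the inequality $W_{n-1}(K^\ast) \ge f_{n-1} \circ f_l^{-1}(W_l(K^\ast))$ supplied by Theorem \ref{cor-BS-ineq-hyperbolic-quermassintegral-ineq} for the strictly convex body $K^\ast$.

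For \eqref{s2:quermassintegral-de-Sitter-ineq-II} and \eqref{s2:quermassintegral-de-Sitter-ineq-III}, the dictionary reduces the targets to $\zeta_1(K^\ast) \le \zeta_2(K^\ast)$ and $\zeta_1(K^\ast) \le \zeta_3(K^\ast)$, respectively. The first is the hyperbolic inequality \eqref{quermassintegral-ineq-hyperbolic-1-convex} (which requires only mean convex and starshaped boundary, both implied by strict convexity), and the second is \eqref{quermassintegral-ineq-hyperbolic-2-convex} (requiring only $2$-convex and starshaped, likewise implied); both are listed in Lemma \ref{s5:key-lemma-ineq-II}. Equality in any of the three hyperbolic inputs forces $K^\ast$ to be a geodesic ball in $\mathbb H^{n+1}$; after a Lorentz boost sending the center of that ball to the Beltrami point, its dual is a coordinate slice in $\mathbb S^{n,1}_{+}$, so $\partial K$ is isometric to a coordinate slice, which is the claimed equality description.

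There is no substantive analytic step beyond the hyperbolic inputs that are already granted. The only item worth verifying is the compatibility of convexity hypotheses: the strictly convex, spacelike $K$ must dualize to a strictly convex $K^\ast \subset \mathbb H^{n+1}$ so that all three hyperbolic inequalities apply, which is precisely the content of Theorem \ref{s2:thm-duality-de-Sitter-space}. The milder hypotheses (``mean convex'' or ``$2$-convex'', together with starshapedness) needed in two of the three applications are \emph{a fortiori} satisfied, so no strengthening is required and the entire proof reduces to a bookkeeping exercise in the index shift $k \leftrightarrow n-k$.
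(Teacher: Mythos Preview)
Your proposal does not address the stated lemma at all. Lemma~\ref{s5:key-lemma-ineq-II} is a compendium of results quoted from the literature (Wang--Xia, Hu--Li--Wei, Andrews--Hu--Li, Brendle--Guan--Li, Li--Wei--Xiong, Lambert--Scheuer, Scheuer); the paper offers no proof of it, and none is expected beyond the citations. What you have written is instead a proof of Theorem~\ref{cor-BS-ineq-deSitter-quermassintegral-ineq-II}: every equation you reference---\eqref{s2:quermassintegral-de-Sitter-ineq-I}, \eqref{s2:quermassintegral-de-Sitter-ineq-II}, \eqref{s2:quermassintegral-de-Sitter-ineq-III}---belongs to that theorem, and your argument even \emph{invokes} Lemma~\ref{s5:key-lemma-ineq-II} as an input rather than establishing it.

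If your intent was actually Theorem~\ref{cor-BS-ineq-deSitter-quermassintegral-ineq-II}, then your argument is correct and matches the paper's own proof essentially line for line: dualize the strictly convex spacelike $K$ to a strictly convex $K^\ast\subset\mathbb H^{n+1}$, translate the identities \eqref{s1:identity-hyperbolic} into the mean-radius dictionary $\zeta_k(K)=\zeta_{n-k}(K^\ast)$, and then feed in Theorem~\ref{cor-BS-ineq-hyperbolic-quermassintegral-ineq} for \eqref{s2:quermassintegral-de-Sitter-ineq-I} and the inequalities \eqref{quermassintegral-ineq-hyperbolic-1-convex}, \eqref{quermassintegral-ineq-hyperbolic-2-convex} for \eqref{s2:quermassintegral-de-Sitter-ineq-II}, \eqref{s2:quermassintegral-de-Sitter-ineq-III}. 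But as a response to the lemma actually posed, the proposal is simply off target.
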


\begin{proof}[Proof of Theorem \ref{cor-BS-ineq-sphere-extension}]
By Theorem \ref{thm-sphere}, we have
\begin{align}\label{s4:identity-in-sphere}
\tan\(\zeta_{n-k}(K)\)\cdot \tan\(\zeta_{k}(K^\ast)\)=1, \quad 0\leq k\leq n,
\end{align}
for smooth bounded and strictly convex domain $K\subset \mathcal{H}(z)$ and its dual body $K^\ast\subset \mathcal{H}(-z)$. Notice that $K^\ast$ is also a smooth bounded and strictly convex domain in the sphere. Then the inequalities \eqref{s1:cor-sphere-ineq-1} and \eqref{s1:cor-sphere-ineq-2} follow from the identity \eqref{s4:identity-in-sphere} and the quermassintegral inequalities \eqref{quermassintegral-ineq-sphere-1} and \eqref{quermassintegral-ineq-sphere-2} in the sphere:
\begin{align}\label{s4:identity-quermassintegral-ineq-sphere}
\zeta_k(K^\ast) \geq \zeta_0(K^\ast), \quad \zeta_{n-k}(K) \leq \zeta_n(K), \quad 0<k\leq n.
\end{align} 
Similarly, the inequalities \eqref{s1:cor-sphere-ineq-3} and \eqref{s1:cor-sphere-ineq-4} follows from the identities \eqref{s4:identity-in-sphere}
and the quermassintegral inequalities \eqref{quermassintegral-ineq-sphere-II}. If equality holds in any one of \eqref{s1:cor-sphere-ineq-1}--\eqref{s1:cor-sphere-ineq-4}, then the equality also holds in the quermassintegral inequalities \eqref{quermassintegral-ineq-sphere-1}, \eqref{quermassintegral-ineq-sphere-2} or \eqref{quermassintegral-ineq-sphere-II}, which implies that $K$ (or $K^\ast$) is a geodesic ball in $\mathbb S^{n+1}$. This completes the proof of Theorem \ref{cor-BS-ineq-sphere-extension}.
\end{proof}

\begin{proof}[Proof of Theorem \ref{cor-BS-ineq-hyperbolic-extension}]
	By Theorem \ref{thm-hyperbolic}, we have the following identities
	\begin{align}\label{s4:identity-in-hyperbolic}
	\coth(\zeta_k(K)) \cdot \tanh(\zeta_{n-k}(K^\ast))=1, \quad 0\leq k\leq n,
	\end{align}
	for any smooth bounded and strictly convex domain $K$ in $\mathbb H^{n+1}$ and its dual body $K^\ast$ in $\mathbb S^{n,1}_{+}$. For any $0\leq k,l\leq n$ with $k+l<n$, then $0\leq k<n-l$ and the quermassintegral inequalities \eqref{quermassintegral-ineq-hyperbolic} for h-convex domains in $\mathbb H^{n+1}$ imply that
	\begin{align}\label{s4:identity-quermassintegral-ineq-hyperbolic}
	\zeta_k(K) \leq \zeta_{n-l}(K).
	\end{align}
	Note that $\coth s$ is monotone decreasing in $s$, we get
	\begin{align*}
	\coth(\zeta_k(K)) \cdot \tanh(\zeta_{l}(K^\ast)) \geq \coth(\zeta_{n-l}(K)) \cdot \tanh(\zeta_{l}(K^\ast))=1,
	\end{align*}
	which yields the desired inequality \eqref{s1:cor-hyperbolic-ineq-1}. The inequality \eqref{s1:cor-hyperbolic-ineq-2} can be proved similarly. If the equality holds in \eqref{s1:cor-hyperbolic-ineq-1} or \eqref{s1:cor-hyperbolic-ineq-2}, then the equality also holds in \eqref{s4:identity-quermassintegral-ineq-hyperbolic}, which implies that $K$ is a geodesic ball in $\mathbb H^{n+1}$. This completes the proof of Theorem \ref{cor-BS-ineq-hyperbolic-extension}.	
\end{proof}

\begin{proof}[Proof of Theorem \ref{cor-BS-ineq-hyperbolic-quermassintegral-ineq}]
    By \eqref{quermassintegral-ineq-hyperbolic-strictly-convex-III}, for any smooth bounded and strictly convex domain $K$ in $\mathbb H^{n+1}$, there hold
    \begin{align*}
    W_{n-1-2i}(K) \leq f_{n-1-2i}\circ f^{-1}_{n-1}(W_{n-1}(K)), \quad 0<2i<n.
    \end{align*}
    To complete the proof, it remains to show that 
    \begin{align}\label{s5:claim}
    W_{n-2k}(K) \leq f_{n-2k}\circ f^{-1}_{n-1}(W_{n-1}(K)), \quad 0<2k\leq n.
    \end{align}
    We prove \eqref{s5:claim} by induction. For $k=1$, by taking an interior point of $K$ as the Beltrami point, its dual $K^\ast$ is a strictly convex and spacelike domain in $\mathbb S^{n,1}_{+}$. Then the quermassintegral inequality \eqref{quermassintegral-ineq-deSitter-II} in $\mathbb S_{+}^{n,1}$ implies that
    $$
    \tanh(\zeta_{2}(K^\ast)) \leq \tanh(\zeta_{1}(K^\ast)).
    $$
    Combining this with the identities \eqref{s4:identity-in-hyperbolic}, i.e.,
    $$
    \coth(\zeta_{n-2}(K)) \cdot \tanh(\zeta_{2}(K^\ast))=\coth(\zeta_{n-1}(K)) \cdot \tanh(\zeta_{1}(K^\ast)),
    $$
    we obtain
    \begin{align*}
    \zeta_{n-2}(K) \leq \zeta_{n-1}(K),
    \end{align*}
    in view of the monotonicity of $\coth s$. This verifies \eqref{s5:claim} for $k=1$.
    
    Assume that \eqref{s5:claim} is true for $k-1$, i.e., 
    \begin{align}\label{s5:induction}
    W_{n-2k+2}(K) \leq f_{n-2k+2}\circ f^{-1}_{n-1}(W_{n-1}(K)),
    \end{align}
    we show that \eqref{s5:claim} also holds for $k$. Along the harmonic mean curvature flow starting from $\partial K$ in $\mathbb H^{n+1}$, we know that the flow hypersurface $M_t=\partial K_t$ is strictly convex and it shrinks to a round point as $t\ra T^\ast$. Then by \eqref{s3:variational-formula}, we have
    \begin{align}\label{s5:evol-Wn-1-radial}
    \frac{d}{dt}W_{n-1}(K_t)=&-\frac{2}{n+1}\int_{M_t}E_n d\mu \nonumber\\ 
    =&-2W_{n+1}(K_t)-nW_{n-1}(K_t) \nonumber\\ 
    =&\left.\(-2W_{n+1}(B_r)-nW_{n-1}(B_r)\)\right|_{r=f_{n-1}^{-1}(W_{n-1}(K_t))}\nonumber\\ 
    =&-\left.\frac{2}{n+1}\int_{\partial B_r}E_n d\mu \right|_{r=f_{n-1}^{-1}(W_{n-1}(K_t))}\nonumber\\ 
    =&-\left.\frac{2}{n+1}\omega_n \cosh^n r \right|_{r=f_{n-1}^{-1}(W_{n-1}(K_t))},
    \end{align}
    where we used \eqref{s1:quermassintegral-def} and 
    $$
    W_{n+1}(K_t)=W_{n+1}(B_r)|_{r=f_{n-1}^{-1}(W_{n-1}(K_t))}=\frac{\omega_n}{n+1}
    $$
    due to Gauss-Bonnet-Chern theorem (see \cite{Chern1944,Chern1945}, see also \cite{Solanes06}). 
    For $k\ge 2$, we also have
    \begin{align}\label{s5:evol-Wn-2k}
    \frac{d}{dt}W_{n-2k}(K_t)=&-\frac{2k+1}{n+1}\int_{M_t} E_{n-2k}\frac{E_n}{E_{n-1}}d\mu \nonumber\\
    \geq &-\frac{2k+1}{n+1}\int_{M_t} E_{n-2k+1} d\mu \nonumber\\
    =&-(2k+1)W_{n-2k+2}(K_t)-(n+1-2k)W_{n-2k}(K_t) \nonumber\\
    \geq & -(2k+1)f_{n-2k+2}\circ f^{-1}_{n-1}(W_{n-1}(K_t))-(n+1-2k)W_{n-2k}(K_t),
    \end{align}
    where we used the Newton-MacLaurin inequality, \eqref{s1:quermassintegral-def} and the induction assumption \eqref{s5:induction}. Note that $f_m(r)=W_m(B_r)$, we have
    \begin{align}\label{s5:W_m-diff}
    f'_m(r)=\frac{d}{dr}W_m(B_r)=&\frac{n+1-m}{n+1}\int_{\partial B_r} E_m d\mu \nonumber\\
           =&\frac{n+1-m}{n+1}\omega_n \sinh^{n-m}r \cosh^{m}r, \quad 0\leq m\leq n. 
    \end{align}
    Using \eqref{s5:W_m-diff} and \eqref{s5:evol-Wn-1-radial}, we get
    \begin{align*}
    &\frac{d}{dt}f_{n-2k}\circ f_{n-1}^{-1}(W_{n-1}(K_t))\\=&\left.\frac{f'_{n-2k}(r)}{f'_{n-1}(r)}\right|_{r=f_{n-1}^{-1}(W_{n-1}(K_t))}\cdot \frac{d}{dt}W_{n-1}(K_t)\\
    =&\frac{2k+1}{2}\left.\frac{\sinh^{2k-1}r}{\cosh^{2k-1}r} \cdot  \(-\frac{2}{n+1}\omega_n \cosh^{n} r\)\right|_{r=f_{n-1}^{-1}(W_{n-1}(K_t))} \\
    =&-\left.\frac{2k+1}{n+1}\int_{\partial B_r}E_{n+1-2k}d\mu \right|_{r=f_{n-1}^{-1}(W_{n-1}(K_t))}\\
    =&-(2k+1) f_{n+2-2k}\circ f_{n-1}^{-1}(W_{n-1}(K_t))-(n+1-2k) f_{n-2k}\circ  f_{n-1}^{-1}(W_{n-1}(K_t)),
    \end{align*}
    where we also used \eqref{s1:quermassintegral-def} in the last equality. Combining this with \eqref{s5:evol-Wn-2k}, we obtain
    \begin{align*}
          &\frac{d}{dt}\(f_{n-2k}\circ f_{n-1}^{-1}(W_{n-1}(K_t))-W_{n-2k}(K_t)\) \\
    \leq  &-(n+1-2k)\(f_{n-2k}\circ f_{n-1}^{-1}(W_{n-1}(K_t))-W_{n-2k}(K_t)\),
    \end{align*}
    and hence 
    \begin{align*}
    \frac{d}{dt} \left[e^{(n+1-2k)t}\(f_{n-2k}\circ f_{n-1}^{-1}(W_{n-1}(K_t))-W_{n-2k}(K_t)\)\right]\leq 0.
    \end{align*}
    As the flow hypersurface $M_t$ shrinks to a round point as $t\ra T^{\ast}$, and the quermassintegral is monotone under the set inclusion, we have (see \cite[Lemma 3.3]{AHL2020})
    \begin{align*}
    \lim_{t\ra T^\ast}W_{m}(K_t)=0,\quad 0\leq m\leq n.
    \end{align*}
    It follows that
    \begin{align*}
         &f_{n-2k}\circ f_{n-1}^{-1}(W_{n-1}(K))-W_{n-2k}(K) \\
    \geq &e^{(n+1-2k)T^\ast}\lim_{t\ra T^\ast}\left[f_{n-2k}\circ f_{n-1}^{-1}(W_{n-1}(K_t))-W_{n-2k}(K_t)\right]\\
    =&0,
    \end{align*} 
    which verifies \eqref{s5:claim} for $k\geq 2$. The equality case can be similarly characterized. This completes the proof of Theorem \ref{cor-BS-ineq-hyperbolic-quermassintegral-ineq}.   
\end{proof}

\begin{proof}[Proof of Theorem \ref{cor-BS-ineq-hyperbolic-extension-II}]
	The quermassintegral inequalities \eqref{quermassintegral-ineq-hyperbolic-strictly-convex-II} in Theorem \ref{cor-BS-ineq-hyperbolic-quermassintegral-ineq} and \eqref{quermassintegral-ineq-hyperbolic-strictly-convex} in Lemma \ref{s5:key-lemma-ineq-II} for strictly convex domains in $\mathbb H^{n+1}$ imply that
	\begin{align*}
	\zeta_l(K) \leq & \zeta_{n-1}(K), \quad 0\leq l<n-1,
	\end{align*}
	and
	\begin{align*}
	\zeta_k(K) \leq & \zeta_{n}(K), \quad 0\leq k<n.
	\end{align*}
	In view of \eqref{s4:identity-in-hyperbolic} and the monotonicity of $\cosh s$, we get
	\begin{align*}
	\coth(\zeta_l(K)) \cdot \tanh(\zeta_{1}(K^\ast)) \geq \coth(\zeta_{n-1}(K)) \cdot \tanh(\zeta_{1}(K^\ast))=1, \quad 0\leq l<n-1,
	\end{align*}
	and
	\begin{align*}
	\coth(\zeta_k(K)) \cdot \tanh(\zeta_{0}(K^\ast)) \geq \coth(\zeta_{n}(K)) \cdot \tanh(\zeta_{0}(K^\ast))=1, \quad 0\leq k<n,
	\end{align*}
	which are the desired inequalities \eqref{s1:hyperbolic-ineq-1} and \eqref{s1:hyperbolic-ineq-2}, respectively. If the equality holds in \eqref{s1:hyperbolic-ineq-1} or \eqref{s1:hyperbolic-ineq-2}, then the equality also holds in \eqref{quermassintegral-ineq-hyperbolic-strictly-convex-II} or \eqref{quermassintegral-ineq-hyperbolic-strictly-convex}, which implies that $K$ is a geodesic ball in $\mathbb H^{n+1}$. This completes the proof of Theorem \ref{cor-BS-ineq-hyperbolic-extension-II}.	
\end{proof}

\begin{proof}[Proof of Theorem \ref{cor-BS-ineq-deSitter-quermassintegral-ineq}]
	For any smooth bounded spacelike domain $K$ in $\mathbb S^{n,1}_{+}$ with its principal curvatures satisfy $0<\k_i\leq 1$, then its dual body $K^\ast$ is a smooth bounded and h-convex domain in hyperbolic space. In view of duality in hyperbolic/de Sitter space, we have $(K^\ast)\ast=K$. The quermassintegral inequality \eqref{quermassintegral-ineq-hyperbolic} for smooth bounded and h-convex domains in hyperbolic space implies that
	\begin{align*}
	\zeta_{l}(K^\ast) \leq \zeta_{k}(K^\ast), \quad 0\leq l<k\leq n.
	\end{align*}
	Then by the monotonicity of $\coth s$, we get
	$$
	\coth(\zeta_{l}(K^\ast)) \geq \coth(\zeta_{k}(K^\ast)), \quad 0\leq l<k\leq n.
	$$
	In view of the identities \eqref{s1:identity-hyperbolic}, i.e.,
	$$
	\coth(\zeta_{l}(K^\ast)) \cdot \tanh(\zeta_{n-l}(K))= \coth(\zeta_{k}(K^\ast)) \cdot \tanh(\zeta_{n-k}(K)), \quad 0\leq l<k\leq n,
	$$
	we obtain
	\begin{align*}
	\zeta_{n-l}(K) \leq \zeta_{n-k}(K), \quad 0\leq l<k\leq n,
	\end{align*}
	which is equivalent to \eqref{s2:quermassintegral-ineq-I}. The equality case can be similarly characterized. This completes the proof of Theorem \ref{cor-BS-ineq-deSitter-quermassintegral-ineq}.
\end{proof}

\begin{proof}[Proof of Theorem \ref{cor-BS-ineq-deSitter-quermassintegral-ineq-II}]
    For any smooth bounded spacelike and strictly convex domain $K$ in $\mathbb S^{n,1}_{+}$, its dual body $K^\ast$ is a smooth bounded and strictly convex domain in hyperbolic space. Then by the monotonicity of $\coth s$ and the quermassintegral inequalities \eqref{quermassintegral-ineq-hyperbolic-strictly-convex-II}, we obtain 
	\begin{align*}
	\coth(\zeta_{n-1}(K^\ast)) \leq \coth(\zeta_{k}(K^\ast)), \quad 0\leq k<n-1.
	\end{align*}
	In view of the identities \eqref{s1:identity-hyperbolic}, we have
	\begin{align*}
	\coth(\zeta_{n-1}(K^\ast)) \cdot \tanh(\zeta_{1}(K))=\coth(\zeta_{k}(K^\ast)) \cdot \tanh(\zeta_{n-k}(K)), \quad 0\leq k<n-1. 
	\end{align*}
	It follows that
	\begin{align*}
	\tanh(\zeta_{1}(K)) \geq \tanh(\zeta_{n-k}(K)), \quad 0\leq k<n-1,
	\end{align*} 
	which is equivalent to \eqref{s2:quermassintegral-de-Sitter-ineq-I} due to the monotonicity of $\tanh s$. 
	Applying the quermassintegral inequalities \eqref{quermassintegral-ineq-hyperbolic-1-convex} and \eqref{quermassintegral-ineq-hyperbolic-2-convex} to strictly convex hypersurfaces in hyperbolic space, the inequalities \eqref{s2:quermassintegral-de-Sitter-ineq-II} and \eqref{s2:quermassintegral-de-Sitter-ineq-III} can be proved similarly. Moreover, the equality case in \eqref{s2:quermassintegral-de-Sitter-ineq-I}--\eqref{s2:quermassintegral-de-Sitter-ineq-III} can be similarly characterized. This completes the proof of Theorem \ref{cor-BS-ineq-deSitter-quermassintegral-ineq-II}. 	
\end{proof}

\begin{bibdiv}
\begin{biblist}

\bib{Alexandrov1937}{article}{
author={Alexandrov, A.D.},
title={Zur Theorie der gemischten Volumina von konvexen K\"orpern, II. Neue Ungleichungen zwischen den gemischten Volumina und ihre Anwendungen},
journal={Mat. Sb. (N.S.)},
volume={2},
date={1937},
pages={1205--1238},	
note={(in Russian)},	
}

\bib{Alexandrov1938}{article}{
	author={Alexandrov, A.D.},
	title={Zur Theorie der gemischten Volumina von konvexen K\"orpern, III. Die Erweiterung zweeier Lehrsatze Minkowskis \"uber die konvexen Polyeder auf beliebige konvexe Flachen},
	journal={Mat. Sb. (N.S.)},
	volume={3},
	date={1938},
	pages={27--46},	
	note={(in Russian)},	
}

\bib{And96}{article}{
    author={Andrews, Ben},
   title={Contraction of convex hypersurfaces by their affine normal},
    journal={J. Differential Geom.},
    year={1996},
    volume={43},
    number={2},
    pages={207--230},
}


\bib{ACW2018}{article}{
	author={Andrews, Ben},
	author={Chen, Xuzhong},
	author={Wei, Yong},
	title={Volume preserving flow and Alexandrov-Fenchel type inequalities in hyperbolic space},
	journal={J. Eur. Math. Soc.(JEMS)},
	volume={23},
	year={2021},
	pages={2467--2509},
}

\bib{AHL2020}{article}{
	author={Andrews, Ben},
	author={Hu, Yingxiang},
	author={Li, Haizhong},
	title={Harmonic mean curvature flow and geometric inequalities},
	journal={Adv. Math.},
	volume={375},
	year={2020},
	pages={107393},
}

\bib{Barbosa-doCarmo-Eschenburg88}{article}{
	author={Barbosa, J. Lucas},
	author={do Carmo, Manfredo},
	author={Eschenburg, Jost},
	title={Stability of hypersurfaces of constant mean curvature in Riemannian manifolds},
	journal={Math. Z.},
	year={1988},
	volume={197},
	pages={123--138},	
}

\bib{Blaschke1917}{book}{
	author={Blaschke, Wilhelm},
	title={\"Uber Affine Geometrie VII: Neue Extremeigenschaften von Ellipse und Ellipsoid},
	journal={Leipzig, Ber.},
	volume={69},
	year={1917},
	pages={306--318},	
}

\bib{BGL19}{article}{
	author={Brendle, Simon},
	author={Guan, Pengfei},
	author={Li, Junfang},
	journal={preprint},
	title={An inverse curvature type hypersurface flow in space forms},
	year={2019},
}

\bib{Bryan-Ivaki-Scheuer2020}{article}{
	author={Bryan, Paul},
	author={Ivaki, Mohammad N.},
	author={Scheuer, Julian},
	title={Harnack inequalities for curvatures in Riemannian and Lorentzian manifolds},
	journal={J. Reine Angew. Math.},
	volume={764},
	year={2020},
	pages={71--109},
}

\bib{CGLS21}{article}{
	author={Chen, Chuanqiang},
	author={Guan, Pengfei},
	author={Li, Junfang},
	author={Scheuer, Julian},
	title={A fully-nonlinear flow and quermassintegral inequalities in the sphere},
	journal={Pure Appl. Math. Q.},
	year={2022},
	volume={18},
	number={2},
	pages={437--461},
}

\bib{Chen-Sun21}{article}{
	author={Chen, Min},
	author={Sun, Jun},
	title={Alexandrov-Fenchel type inequalities in the sphere},
	journal={Adv. Math.},
	year={2022},
	volume={397},
	pages={108203},
}

\bib{Chern1944}{article}{
	author={Chern, Shiing-shen}, 
	title={A simple intrinsic proof of the Gauss-Bonnet formula for closed Riemannian manifolds}, 
	journal={Ann. of Math.},
	number={2}, 
	volume={45},
	year={1944}, 
	pages={747--752},
}

\bib{Chern1945}{article}{
	author={Chern, Shiing-shen}, 
	title={On the curvatura integra in a Riemannian manifold}, 
	journal={Ann. of Math.},
	number={2},
	volume={46},
	year={1945}, 
	pages={674--684},
}

\bib{DoCarmo-Warner1970}{article}{
	author={Do Carmo, M.P.},
	author={Warner, F.W.},
	title={Rigidity and convexity of hypersurfaces in the spheres},
	journal={J. Differential Geom.},
	year={1970},
	volume={4},
	pages={134--144},
}

\bib{Firey1973}{article}{
	author={Firey, W.J.},
	title={Support flats to convex bodies},
	journal={Geom. D.},
	volume={2},
	year={1973},
	pages={225--248},
}

\bib{GHS03}{article}{
	author={Gao, Fuchang},
	author={Hug, Daniel},
	author={Schneider, Rolf},
	title={Intrinsic volumes and polar sets in spherical space},
	journal={Math. Notae},
	volume={41},
	year={2003},
	pages={159--176},
}

\bib{GeWW14}{article}{
	author={Ge, Yuxin},
	author={Wang, Guofang},
	author={Wu, Jie},
	title={Hyperbolic Alexandrov-Fenchel Quermassintetral inequalities II},
	journal={J. Differential Geom.},
	volume = {98},
	pages = {237--260},
	year = {2014},
}

\bib{Gerh06}{book}{
	author={Gerhardt, Claus},
	title={Curvature Problems},
	series={Series in Geometry and Topology},
	volume={39},
	publisher={International Press, Somerville, MA},
	date={2006},
}


\bib{Ge15}{article}{
	author={Gerhardt, Claus},
	title={Curvature flows in the sphere},
	journal={J. Differential Geom.},
	volume={100},
	pages={301--347},
	number={2},
	year={2015},
}

\bib{Ghandehari1991}{article}{
	author={Ghandehari, Mostafa},
	title={Polar duals of convex bodies},
	journal={Proc. Amer. Math. Soc.},
	volume={113},
	year={1991},
	pages={799--808},
}

\bib{GL09}{article}{
	author={Guan, Pengfei},
	author={Li, Junfang},
	title={The quermassintegral inequalities for $k$-convex starshaped domains},
	journal={Adv. Math.},
	volume = {2009},
	pages = {1725--1732},
	year = {2009},
}



\bib{GL21}{article}{
	author={Guan, Pengfei},
	author={Li, Junfang},
	title={Isoperimetric type inequalities and hypersurface flows},
	journal={Journal of Mathematical Study},
	volume={54},
	year={2021},
	number={1},
	pages={56--80},
}

\bib{Heil1976}{article}{
	author={Heil, Erhard},
	title={Ungleichungen f\"ur die Querma\ss integrale polarer K\"orper},
	journal={Manuscripta Math.},
	volume={19},
	year={1976},
	pages={143--149},
}

\bib{HL19}{article}{
	author={Hu, Yingxiang},
	author={Li, Haizhong},
	title={Geometric inequalities for hypersurfaces with nonnegative sectional curvature in hyperbolic space},
	journal={Calc. Var. Partial Differential Equations},
	volume = {58},
	pages={55},
	year = {2019},	
}

\bib{Hu-Li-2022}{article}{
	author={Hu, Yingxiang},
	author={Li, Haizhong},
	title={Geometric inequalities for static convex domains in hyperbolic space},
	journal={Trans. Amer. Math. Soc},
	volume={375},
	number={8},
	year={2022},
	pages={5587--5615},
}

\bib{Hu-Li-Wei2020}{article}{
	author={Hu, Yingxiang},
	author={Li, Haizhong},
	author={Wei, Yong},
	title={Locally constrained curvature flows and geometric inequalities in hyperbolic space},
	journal={Math. Ann.},
	year={2022},
	volume={382},
	pages={1425--1474},
}

\bib{Hug1996}{article}{
	author={Hug, Daneil},
	title={Contributions to affine surface area},
	journal={Manuscripta Math.},
	year={1996},
	volume={91},
	pages={283--301},
}

\bib{LWX14}{article}{
	author={Li, Haizhong},
	author={Wei, Yong},
	author={Xiong, Changwei},
	title={A geometric inequality on hypersurface in hyperbolic space},
	journal={Adv. Math.},
	volume = {253},
	number={1},
	pages={152--162},
	year = {2014},
}

\bib{Lambert-Scheuer2021}{article}{
	author={Lambert, Ben},
	author={Scheuer, Julian},
	title={Isoperimetric problems for spacelike domains in generalized Robertson-Walker spaces},
	journal={J. Evol. Equ.},
	number={1},
	volume={21},
	pages={377-389},
	year={2021},
}

\bib{Lutwak1975}{article}{
	author={Lutwak, Erwin},
	title={Dual mixed volumes},
	journal={Pacific J. Math.},
	volume={58},
	year={1975},
	pages={531--538},
}

\bib{Lutwak1976}{article}{
	author={Lutwak, Erwin},
	title={On cross-sectional measures of polar reciprocal convex bodies},
	journal={Geom. D.},
	volume={5},
	year={1976},
	pages={79--80},
}


\bib{Lutwak-Zhang1997}{article}{
	author={Lutwak, Erwin},
	author={Zhang, Gaoyong},
	title={Blaschke-Santal\'o inequalities},
	journal={J. Differential Geom.},
	volume={47},
	year={1997},
	pages={1--16},
}

\bib{Makowski2013}{article}{
	author={Makowski, Matthias},
	title={Volume preserving curvature flows in Lorentzian manifolds},
	journal={Calc. Var. Partial Differ. Equ.},
	volume={46},
	date={2013},
	number={1-2},
	pages={213--252},
}

\bib{Makowski-Scheuer2016}{article}{
	author={Makowski, Matthias},
	author={Scheuer, Julian},
	title={Rigidity results, inverse curvature flows and Alexandrov-Fenchel type inequalities in the sphere},
	journal={Asian J. Math.},
	volume={20},
	date={2016},
	number={5},
	pages={869--892},
}

\bib{Meyer-Pojor1990}{article}{
	author={Meyer, M.},
	author={Pajor, A.},
	title={On the Blaschke-Santal´o inequality},
	journal={Arch. Math.},
	volume={55},
	date={1990},
	pages={82--93},
}

\bib{Oneil1983}{book}{
	author={O'Neil, B.},
	title={Semi-Riemannian geometry},
	publisher={Academic Press, Inc., New York},
	year={1983},
}

\bib{Petty1985}{article}{
	author={Petty, C.M.},
	title={Affine isoperimetric problems},
	journal={Ann. New York Acad. Sci.},
	volume={440},
	year={1985},
	pages={113--127},
}

\bib{Reilly73}{article}{
	author={Reilly, Robert C.},
	title={Variational properties of functions of the mean curvatures for hypersurfaces in space forms},
	journal={J. Differential Geom.},
	year={1973},
	volume={8},
	pages={465--477},
}

\bib{Schneider2014}{book}{
	author={Schneider, Rolf},
	title={Convex bodies: the Brunn-Minkowski theory},
	publisher={Cambridge University Press},
	date={2014},
	series={Encyclopedia of Mathematics and its Applications},
	volume={151},
}

\bib{Sant1949}{article}{
	author={Santal\'o, Luis A.},
	title={Un invariante affine para los cuerpos convexos del espacio de $n$ dimensiones},
	journal={Portugaliae Math.},
	volume={8},
	year={1949},
	pages={155--161},
}

\bib{Sant2004}{book}{
	author={Santal\'o, Luis A.},
	title={Integral Geometry and Geometric Probability},
	series={Cambridge Mathematical Library},
	edition={2},
	note={With a foreword by Mark Kac},
	publisher={Cambridge University Press, Cambridge},
	date={2004},
	pages={xx+404},
}

\bib{Scheuer19}{article}{
	author={Scheuer, Julian},
	title={The Minkowski inequality in de Sitter space},
	journal={Pacific J. Math.},
	volume={314},
	number={2},
	year={2021},
	pages={425--449},
}


\bib{Solanes06}{article}{
	author={Solanes, Gil},
	title={Integral geometry and the Gauss-Bonnet theorem in constant curvature spaces},
	journal={Trans. Amer. Math. Soc.},
	year={2006},
	volume={358},
	number={3},
	pages={1105--1115},
}		

\bib{Solanes-Teufel05}{article}{
	author={Solanes, Gil},
	author={Teufel, Eberhard},
	title={Integral geometry in constant curvature Lorentz spaces},
	journal={Manuscripta Math.},
	year={2005},
	volume={118},
	pages={411--423},
}	

\bib{Trudinger1994}{article}{
	author={Trudinger, Neil S.},
	title={Isoperimetric inequalities for quermassintegrals},
	journal={Ann. Inst. H. Poincar\'e Anal. Non Lin\'eaire},
	year={1994},
	volume={11},
	pages={411--425},
}	

\bib{Wang-Xia14}{article}{
	author={Wang, Guofang},
	author={Xia, Chao}, 
	title={Isoperimetric type problems and Alexandrov-Fenchel type inequalities in the hyperbolic space}, 
	journal={Adv. Math.},
	volume={259},
	year={2014}, 
	pages={532--556},
}

\bib{Wei-Xiong2015}{article}{
   author={Wei, Yong},
   author={Xiong, Changwei},
   title={Inequalities of Alexandrov-Fenchel type for convex hypersurfaces in hyperbolic space and in the sphere},
   journal={Pacific J. Math.},
   volume={277},
   date={2015},
   pages={219-239},
}

\bib{Yu2016}{article}{
	author={Yu, Hao},
	title={Dual flows in hyperbolic space and de Sitter space},
	eprint={arXiv:1604.02369},
	year={2016},
}

\end{biblist}
\end{bibdiv}
\end{document}